\theoremstyle{plain}
\newtheorem{prob}{Problem}[section]
\newtheorem{defin}[prob]{Definition}
\newtheorem{theorem}[prob]{Theorem}
\newtheorem*{theorem*}{Theorem}
\newtheorem*{question*}{Question}
\newtheorem{prop}[prob]{Proposition}
\newtheorem{corollary}[prob]{Corollary}
\newtheorem{lemma}[prob]{Lemma}
\theoremstyle{remark}
\newtheorem{example}[prob]{Example}
\newtheorem{remark}[prob]{Remark}
\newcommand{\R}{\mathbb{R}}
\newcommand{\adef}{\begin{defin}}
\newcommand{\zdef}{\end{defin}}
\def\Ext{\operatorname{Ext}}
\def\PB{\operatorname{PB}}
\def\PO{\operatorname{PO}}
\newcommand{\ra}{\rangle}
\newcommand{\la}{\langle}
\newcommand{\ult}{\textrm{ult}}
\newcommand{\con}{\mathfrak c}
\renewcommand\c[1]{\mathcal{#1}}
\renewcommand\b[1]{\mathbb{#1}}
\newcommand{\fA}{\mathfrak A}
\newcommand{\fB}{\mathfrak B}
\newcommand{\bA}{\mathbb A}
\newcommand{\bS}{\mathbb S}
\newcommand{\cA}{\mathcal A}
\newcommand{\cB}{\mathcal B}
\newcommand{\cF}{\mathcal F}
\newcommand{\vf}{\varphi}
\newcommand{\fin}{\rm fin}
\newcommand{\arrow}[1]{\overset{#1}{\longrightarrow}}
\newcommand{\shortseq}[5]{0\arrow{}{#1}\arrow{#4}{#2}\arrow{#5}{#3}\arrow{} 0}
\def\Ext{\operatorname{Ext}}
\newcommand{\K}{\mathcal K}
\newcommand{\cL}{\mathcal L}
\newcommand{\sub}{\subseteq}
\newcommand{\er}{\mathbb R}
\newcommand{\mi}{\protect{\sf ri}}
\newcommand{\wt}{\widetilde}
\title{Sailing over three problems of Koszmider}
\author[Cabello]{F\'elix Cabello S\'anchez}
\address{Instituto de Matem\'aticas Imuex\\ Universidad de Extremadura\\
Avenida de Elvas\\ 06071-Badajoz\\ Spain} \email{fcabello@unex.es\\
castillo@unex.es\\salgueroalarcon@unex.es}
\author[Castillo]{Jes\'{u}s M.F. Castillo}
\author[Marciszewski]{Witold Marciszewski}
\address{Institute of Mathematics\\
	University of Warsaw\\ Banacha 2\newline 02--097 Warszawa\\
	Poland} \email{wmarcisz@mimuw.edu.pl}
\author[Plebanek]{Grzegorz Plebanek}
\address{Mathematical Institute \\ University of Wroc\l aw\\
	pl.Grunwaldzki, 2/4\\ 50-384 Wroclaw \\ Poland\\} \email{grzes@math.uni.wroc.pl}
\author[Salguero-Alarc\'on]{Alberto Salguero-Alarc\'on}
\thanks{The research of the first and second authors has been supported in part by Project MINCIN MTM2016-76958-C2-1-P. The research of the first, second and fifth authors has been supported in part by Project IB16056 de la Junta de Extremadura.
The fourth author has been supported by the grant 2018/29/B/ST1/00223 from National Science Centre, Poland.
The fifth author benefited from a grant associated to Project IB16056 de la Junta de Extremadura.}
\subjclass[2010]{46E15, 03E50, 54G12}
\begin{document}

\begin{abstract}
We discuss  three problems of Koszmider on  the structure of the spaces of continuous functions on the Stone compact $K_{\c A}$ generated by an almost disjoint family $\c A$ of infinite subsets of $\omega$ --- we present a solution to two problems and develop  a previous results of Marciszewski and Pol answering the third one.
We will show, in particular,  that assuming Martin's axiom  the space $C(K_{\c A})$ is uniquely determined up to isomorphism by the cardinality of $\c A$
whenever $|\c A|<\mathfrak c$,   while there are $2^{\mathfrak c}$ nonisomorphic spaces $C(K_{\c A})$ with  $|\c A|= \mathfrak c$. We also investigate Koszmider's problems in the context of the class of separable Rosenthal compacta and indicate the meaning of our results in the language of twisted sums of $c_0$ and some $C(K)$ spaces.
\end{abstract}

\maketitle
\section{Introduction}

Koszmider poses in \cite{kosz} five problems about the structure of the spaces of continuous functions on the Stone compact $K_{\c A}$ generated by an almost disjoint family $\c A$ of infinite subsets of $\omega$. Problem 2, that Koszmider himself solves, is the existence of an almost disjoint family $\mathcal A$ such that under either the Continuum Hypothesis {\sf CH} or Martin's Axiom {\sf MA},
$$C(K_{\mathcal  A})\simeq c_0 \oplus C(K_{\mathcal A})$$
 is the only possible decomposition of $C(K_{\mathcal A})$ in two infinite dimensional subspaces. Very recently Koszmider and Laustsen have obtained in \cite{KL20}  the same result without any additional set-theoretic assumptions. Problem 1 asks whether a similar separable space exists. Argyros and Raikoftsalis \cite{argraik} call a Banach space $\mathcal X$ \emph{quasi-prime} if there exists an infinite dimensional subspace $\mathcal Y$ such that $\mathcal X \simeq \mathcal Y \oplus \mathcal X$ is the only possible nontrivial decomposition of $\mathcal X$. If, moreover $\mathcal Y$ is not isomorphic to $\mathcal X$ then $\mathcal X$ is called \emph{strictly quasi-prime}. Argyros and Raikoftsalis \cite{argraik} show the existence, for each $\ell_p$, $p\geq 1$, (resp. $c_0$) of a separable strictly quasi-prime space $\mathcal X_p\simeq \ell_p\oplus \mathcal X_p$ (resp. $\mathcal X_0\simeq c_0\oplus \mathcal X_0$).

 We are concerned in this paper with the other three:
\begin{itemize}
\item \textbf{Problem 3} Assuming {\sf MA}, is it true that if $|\c A|=|\c B| < \mathfrak c$ then $C(K_{\c A}) \simeq C(K_{\c B})  $ ?

\item \textbf{Problem 4} Assuming {\sf MA} and $|\c A| < \mathfrak c$ is $C(K_{\c A}) \simeq C(K_{\c A}) \oplus C(K_{\c A})$?

 \item \textbf{Problem 5} Are there two almost disjoint families $\c A, \c B$ of the same cardinality such that $C(K_{\c A})$ is not isomorphic to $C(K_{\c B})$ ?
\end{itemize}

Koszmider's questions are  mentioned
by Hru\v{s}\'ak \cite[9.2]{Hr14} in his survey on applications of almost disjoint families.
Let us point out that  Problem 5 was actually solved by Marciszewski and Pol \cite{marcpol} who
gave an example of a pair of almost disjoint families $\c A, \c B$ of cardinality $\mathfrak c$ such that $C(K_{\c A})\not\simeq C(K_{\c B})$ --- this is a direct
consequence of \cite[Theorem 3.4]{marcpol}.
It was, moreover,  briefly outlined in \cite[7.4]{marcpol} how one can prove, using some  ideas from \cite{Ma88},
that there are $2^\con$ isomorphism types of Banach spaces of the form $C(K_{\c A})$.

We will solve affirmatively Problems 3 and 4 and present a detailed, self-contained solution to Problem 5.
Summing up, we will obtain

\begin{theorem}\label{abc} $\;$
\begin{enumerate}[(a)]
\item Under {\sf MA}($\kappa$),  if $\c A$ and $\c B$ are almost disjoint families such that  $|\cA|=|\cB|=\kappa$, then
     $C(K_{\c A})\simeq C(K_{\c B})$.
\item Under {\sf MA}($\kappa$), if $\cA$ is an almost disjoint family with $|\cA|=\kappa$,  then $C(K_{\c A}) \simeq C(K_{\c A})\oplus C(K_{\c A})$.
\item  There are $2^{\mathfrak c}$ nonisomorphic spaces $C(K_{\c A})$ for almost disjoint families $\cA$  of size    $\con$.
\end{enumerate}\end{theorem}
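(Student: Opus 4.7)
The plan is to reduce parts (a) and (b) to a single splitting result and to treat part (c) via an invariant-plus-coding argument. For any almost disjoint family $\c A$ on $\omega$, the compactum $K_{\c A}$ is scattered of Cantor--Bendixson height $3$: the isolated points are $\omega$, the second level is $\c A$ as a closed discrete subspace, and the top level is a single point $\infty$ which is the limit of $\c A$. Restriction to the closed set $\c A\cup\{\infty\}$ produces a natural short exact sequence
$$ 0 \longrightarrow c_0 \longrightarrow C(K_{\c A}) \longrightarrow c_0(\c A) \oplus \R \longrightarrow 0, $$
exhibiting $C(K_{\c A})$ as a twisted sum of $c_0$ with $c_0(\kappa)$, where $\kappa=|\c A|$. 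Hence the isomorphism type of $C(K_{\c A})$ is controlled by its class in $\Ext(c_0(\kappa), c_0)$.

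For (a) and (b) the main technical step is a splitting lemma: under \textsf{MA}$(\kappa)$ one has $\Ext(c_0(\kappa), c_0)=0$, so that $C(K_{\c A})\simeq c_0\oplus c_0(\kappa)\simeq c_0(\kappa)$ for every AD family $\c A$ of size $\kappa$. I would prove the splitting by transfinite recursion of length $\kappa$: after fixing a well-ordering of $\c A$, at stage $\alpha$ one extends a bounded linear section, already defined on the countable piece generated by the first $\alpha$ coordinates of the quotient, by one more coordinate corresponding to the $\alpha$-th element of $\c A$. The admissible extensions are encoded as conditions in a ccc partial order whose dense subsets are indexed by countable combinatorial and metric data (continuity, norm bound, and compatibility with the twisted-sum structure), and \textsf{MA}$(\kappa)$ supplies a generic filter meeting the $\leq\kappa$ relevant dense sets. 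Once the splitting is established, (a) is immediate, and (b) follows because $c_0(\kappa)\oplus c_0(\kappa)\simeq c_0(\kappa)$ for every infinite $\kappa$; equivalently, (b) can be derived from (a) by noting that $K_{\c A_1\cup \c A_2}=K_{\c A_1}\sqcup K_{\c A_2}$ as clopen pieces whenever $\c A_1, \c A_2$ live on disjoint infinite subsets of $\omega$, which together with (a) identifies $C(K_{\c A})\oplus C(K_{\c A})$ with another $C(K_{\c B})$ of size $\kappa$.

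For (c), I would extract from Marciszewski and Pol \cite{marcpol} an intrinsic invariant $\Phi(\c A)$ of the AD family which (i) is preserved by Banach-space isomorphisms of $C(K_{\c A})$ and (ii) can take $2^{\con}$ distinct values on AD families of size $\con$. A natural candidate is a topological feature of $K_{\c A}$ such as the existence of certain continuous images or of $\sigma$-disjoint separators in $\omega$ for prescribed subfamilies of $\c A$; Theorem~3.4 of \cite{marcpol} already furnishes one such invariant distinguishing two spaces, and its hypotheses are flexible enough to realise a continuum of mutually incompatible values. For each $X\subseteq\con$ I would then build an AD family $\c A_X$ of size $\con$ by transfinite recursion, at stage $\alpha$ inserting a ``gadget'' whose presence or absence in $\c A_X$ is dictated by whether $\alpha\in X$, arranged so that $\Phi(\c A_X)$ faithfully recovers $X$ modulo a small equivalence. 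Since $X$ ranges over $2^{\con}$ subsets of $\con$, this yields $2^{\con}$ pairwise non-isomorphic spaces $C(K_{\c A_X})$.

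The hardest step is the splitting lemma in (a). In ZFC the group $\Ext(c_0(\con), c_0)$ is non-trivial --- as witnessed by the Marciszewski--Pol examples under \textsf{CH} that underlie part (c) --- so the conclusion of (a) cannot hold without extra set-theoretic input. The use of \textsf{MA}$(\kappa)$ is therefore essential, and the combinatorial heart of the argument is the verification that the approximation poset governing the recursive construction of the section is ccc; once this is in place the inductive step runs smoothly and (a), (b) follow, with (c) reducing to a careful coding with the Marciszewski--Pol invariant.
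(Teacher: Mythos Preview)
Your central splitting lemma for (a) and (b) is false. You claim that under {\sf MA}$(\kappa)$ one has $\Ext(c_0(\kappa),c_0)=0$, so that every $C(K_{\c A})$ with $|\c A|=\kappa$ is isomorphic to $c_0(\kappa)$. But Theorem~\ref{tcc:1} of the paper (an immediate ZFC consequence of Theorem~\ref{i:1}) shows the opposite: for every uncountable almost disjoint family $\c A$, the sequence $0\to c_0\to C(K_{\c A})\to c_0(\kappa)\to 0$ is \emph{nontrivial}, because the remainder $\bA(\kappa)$ is not $ccc$. So $\Ext(c_0(\kappa),c_0)\neq 0$ for every uncountable $\kappa$, irrespective of {\sf MA}, and in particular $C(K_{\c A})\not\simeq c_0(\kappa)$. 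The ccc poset you sketch cannot exist; the obstruction is genuine, not a matter of checking more dense sets.

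What the paper uses instead is the vanishing of a \emph{different} Ext group: under {\sf MA}$(\kappa)$ one has $\Ext(C(K_{\c A}),c_0)=0$ (Theorem~\ref{avimar}(b)). This does not say that the defining sequences $\texttt a$ and $\texttt b$ split; it says that their pull-backs along the respective quotient maps split, i.e.\ $\texttt a\rho_{\c B}\equiv 0\equiv \texttt b\rho_{\c A}$. The diagonal principle (Proposition~\ref{semi}(2)) then gives $c_0\oplus C(K_{\c A})\simeq c_0\oplus C(K_{\c B})$, and one absorbs the extra $c_0$ via Remark~\ref{observation}. Part (b) is handled by a similar Ext calculation together with a short argument that any twisted sum $X$ of $c_0$ and $c_0(\kappa)$ with $\Ext(X,c_0)=0$ already satisfies $X\simeq c_0\oplus X$. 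The point is that two nontrivial extensions can have isomorphic middle spaces without either splitting; your plan collapses this distinction.

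For (c) your invariant-and-coding scheme is not the route taken. The paper uses a pure counting argument (Theorem~\ref{counting:1}): since each $K_{\c A}$ is separable with $|M(K_{\c A})|=\con$, at most $\con$ pairwise non-homeomorphic $K_{\c A}$ can share a single isomorphism type of $C(K_{\c A})$; as there are $2^{\con}$ non-homeomorphic $K_{\c A}$ (homeomorphisms are determined by permutations of $\omega$), one gets $2^{\con}$ isomorphism types. This avoids constructing any explicit invariant taking $2^{\con}$ values.
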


Here {\sf MA}($\kappa$) denotes Martin's axiom for $ccc$ partial orders and $\kappa$ many dense sets (recall that if we assume
{\sf MA}($\kappa$) then, automatically, $\kappa<\con$).

In the context of part (b) of the above theorem it is worth to recall that the first example of an infinite compact space $K$ such that $C(K)$ is not isomorphic to $C(K)\oplus C(K)$ was given by Semadeni in \cite{Se60}. His space $K$ was the interval of ordinals $[0,\omega_1]$ equipped with the order topology. In \cite{Ma1}  an example of an almost disjoint family $\c A$ was constructed, such that the space $C(K_{\c A})$ endowed with the weak topology is not homeomorphic to $C(K_{\c A})\oplus C(K_{\c A})$ with the weak topology, hence the spaces $C(K_{\c A})$ and $C(K_{\c A})\oplus C(K_{\c A})$ are not isomorphic.

Our proof of  Theorem \ref{abc}(c) follows a relatively simple counting argument suggested in \cite{marcpol}.
This approach is applicable to study a more general question, on the number of pairwise nonisomorphic twisted sums of $c_0$ and Banach spaces of the form $C(K)$. In Section 5 we apply the technique to $K$ being either  the one point compactification of the discrete space $\con$ or
 the classical nonmetrizable compact space called the double arrow space or the split interval .

Recall that Rosenthal compacta form an interesting and important class of topological spaces that originated in functional analysis, see the survey articles \cite{De14} and \cite{Mar03}. Sections 6 and 7
are devoted to study Rosenthal compacta in the light of our results from the first part of the paper.
Our main motivation here is related to the fact that the proof of \ref{abc}(c)  does not provide `concrete' examples of pairs of nonisomorphic Banach spaces in question. Therefore, we want to indicate more constructive ways of obtaining such examples.
Note that, if we construct an almost disjoint family $\c A$ in an `effective' way (i.e., such $\c A$, treated as subspace of the Cantor set $2^\omega$, is Borel or analytic), then the resulting space $K_{\c A}$ is a  Rosenthal compactum, see \cite[Lemma 4.4]{MaPo2}.
We show in Section 7, that using a completely different argument one can
name an uncountable sequence of almost disjoint families $\{ {\c A}_\xi:\xi<\omega_1\}$ such that every $K_{{\c A}_\xi}$ is a Rosenthal compactum
and the Banach spaces $C( K_{{\c A}_\xi})$ are pairwise nonisomorphic.
We also prove a similar result on the class of twisted sums of $c_0$ and $C(\bS)$, where $\bS$ is the double arrows space, a well-known nonmetrizable separable Rosenthal compactum itself.
The latter result partially extend \cite[Corollary 4.11]{avimar} stating that
$c_0$ admits a nontrivial twisted sum with $C(K)$ for every  nonmetrizable separable Rosenthal compactum $K$.
Our considerations presented in Section 7 build on delicate  descriptive properties of separable Rosenthal compacta;
to make the presentation reasonably self-contained, we provide in Section 6  all the relevant facts to be used.

The authors are very grateful to the referee for a very careful reading and several comments that enabled us to improve the
presentation.

\section{Preliminaries on twisted sums of Banach spaces}
We will write $A\simeq B$ to mean that the Banach spaces $A$ and $B$ are isomorphic. An \emph{exact sequence} $\texttt z$ of Banach spaces is a diagram
$$
\begin{CD} 0@>>> Y @>\jmath>> Z @>\rho>>
X@>>>0
\end{CD}$$
formed by Banach spaces and linear continuous operators in which the kernel of each arrow coincides with the image of the
preceding one. The middle space $Z$ is usually called a \emph{twisted sum} of $Y$
and $X$. By the open mapping theorem, $Y$ must be isomorphic to a subspace of $Z$ and $X$ to the quotient $Z/Y$. Two exact sequences
$\texttt z$ and $\texttt s$ are said to be equivalent, denoted $\texttt z \equiv \texttt s$, if there is an operator $T: Z\to S$ making commutative the diagram
$$
\begin{CD} 0@>>> Y @>\jmath_{\texttt z}>> Z @>\rho_{\texttt z}>> X@>>>0\\
&&@| @VVTV @|\\
 0@>>> Y @>>\jmath_{\texttt s}> S @>>\rho_{\texttt s}> X@>>>0
\end{CD}$$
The sequence $\texttt z$ is said to be \emph{trivial}, or \emph{to split}, if the injection $\jmath$ admits a left inverse; i.e., there is a linear continuous projection $P: Z \to Y$ along $\jmath$. Equivalently, if $\texttt z \equiv 0$ where $0$ denotes the exact sequence
$0\to Y \to Y\oplus X \to X \to 0$. Given an exact sequence $\texttt z$ and an operator $\gamma: X'\to X$ the \emph{pull-back} exact sequence $\texttt z \gamma$ is the lower sequence in the diagram
$$
\begin{CD} 0@>>> Y @>\jmath>> Z @>\rho>> X@>>>0\\
&&@| @AAA @AA\gamma A\\
0@>>> Y @>>> \PB @>>> X'@>>>0
\end{CD}$$
where $\PB = \{(z,x')\in Z\oplus_\infty X': \rho z = \gamma x'\}$ endowed with the subspace norm. Dually, given an operator $\alpha: Y\to Y'$ the \emph{push-out} exact sequence $\alpha \texttt z $ is the lower sequence in the diagram
$$
\begin{CD} 0@>>> Y @>\jmath>> Z @>\rho>> X@>>>0\\
&&@V\alpha VV  @VVV @| \\
0@>>> Y' @>>>\PO @>>> X@>>>0
\end{CD}$$
where $\PO = (Y'\oplus_1 Z)/\Delta$, where $\Delta = \{(\alpha y, - \jmath y): y\in Y\}$ endowed with the quotient norm. We need to keep in mind the following two facts:

\begin{prop}\label{semi} Let $\texttt z$ and $\texttt s$ be two exact sequences $$\shortseq{Y}{Z}{X}{}{\rho_{\texttt z}} \qquad  \qquad \shortseq{Y}{S}{X}{}{\rho_{\texttt s}}$$
\begin{enumerate}[(1)]
	\item If $\texttt s \rho_{\texttt z}\equiv 0$ then there is an operator $\tau: Y \to Y$ such that $\tau \texttt z \equiv \texttt s$.
	\item If $\texttt s \rho_{\texttt z} \equiv 0 \equiv \texttt z \rho_{\texttt s} $ then $Y\oplus S \simeq Y\oplus Z$.
\end{enumerate}
\end{prop}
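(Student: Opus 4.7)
My plan is to exploit the universal property of the pull-back in both parts, and to leverage a symmetry of the pull-back construction in (2).

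For (1), I would unfold the hypothesis that $\texttt{s}\rho_\texttt{z}$ splits. The pull-back sequence is
$$0\to Y\to \PB\to Z\to 0, \quad \PB=\{(s,z)\in S\oplus_\infty Z:\rho_\texttt{s}s=\rho_\texttt{z}z\},$$
and its triviality provides a bounded right inverse $\sigma:Z\to\PB$ of the canonical quotient $(s,z)\mapsto z$. Composing $\sigma$ with the first coordinate projection yields an operator $T:Z\to S$ with $\rho_\texttt{s}T=\rho_\texttt{z}$. Since then $\rho_\texttt{s}(T\jmath_\texttt{z})=\rho_\texttt{z}\jmath_\texttt{z}=0$, the composition $T\jmath_\texttt{z}$ takes values in $\jmath_\texttt{s}(Y)$, and so there is a unique $\tau:Y\to Y$ with $\jmath_\texttt{s}\tau=T\jmath_\texttt{z}$. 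A routine diagram chase, based on the universal property of the push-out, then shows that the pair $(\jmath_\texttt{s},T):Y\oplus_1 Z\to S$ factors through the push-out of $\texttt{z}$ by $\tau$, producing the desired equivalence $\tau\texttt{z}\equiv\texttt{s}$.

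For (2), my approach is to observe that a single Banach space carries both pull-back extensions. Set
$$\PB=\{(s,z)\in S\oplus_\infty Z:\rho_\texttt{s}s=\rho_\texttt{z}z\}.$$
The pull-back of $\texttt{s}$ along $\rho_\texttt{z}$ is the sequence $0\to Y\to\PB\to Z\to 0$, while the swap $(s,z)\mapsto(z,s)$ identifies $\PB$ isometrically with the middle term of the pull-back of $\texttt{z}$ along $\rho_\texttt{s}$, namely $0\to Y\to\PB\to S\to 0$. The hypothesis $\texttt{s}\rho_\texttt{z}\equiv 0$ splits the first sequence, giving $\PB\simeq Y\oplus Z$; the hypothesis $\texttt{z}\rho_\texttt{s}\equiv 0$ splits the second, giving $\PB\simeq Y\oplus S$. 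Combining yields $Y\oplus Z\simeq Y\oplus S$.

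I do not anticipate a serious obstacle here. The only mildly delicate point is the final diagram chase in (1) that turns the pair $(\tau,T)$ into a formal equivalence of extensions, but this is a standard manipulation with push-outs; part (2) then reduces to an elegant bookkeeping argument once the symmetry of $\PB$ in the variables $s,z$ is noticed.
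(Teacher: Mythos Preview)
Your proposal is correct. The paper itself gives no detailed argument: it simply remarks that (1) is ``an easy consequence of the homology sequence'' (citing \cite{cabecastlong}) and that (2) is the ``diagonal principle'' from \cite{castmoreisr}. Your treatment of (1) is more hands-on than the paper's: rather than invoking exactness of
\[
\Hom(Y,Y)\longrightarrow \Ext(X,Y)\xrightarrow{\ \cdot\,\rho_{\texttt z}\ }\Ext(Z,Y)
\]
abstractly, you unwind the splitting of the pull-back to produce the lift $T:Z\to S$ with $\rho_{\texttt s}T=\rho_{\texttt z}$ and then read off $\tau$; this is of course what the homology sequence encodes, but your version is self-contained. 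For (2), your observation that the single space $\PB=\{(s,z):\rho_{\texttt s}s=\rho_{\texttt z}z\}$ sits simultaneously in both pull-back sequences \emph{is} the diagonal principle, so here your argument coincides with the cited one.
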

\begin{proof}
	The first fact can be seen in \cite{cabecastlong} and is an easy consequence of the homology sequence; the second is the so-called diagonal principle \cite{castmoreisr}.
\end{proof}

The space of exact sequences between two given spaces $Y$ and $X$ modulo equivalence will be denoted $\Ext(X,Y)$ (in reverse order). We will write
$\Ext(X,Y)=0$ to mean that all elements $\texttt z\in \Ext(X,Y)$ are $\texttt z \equiv 0$.

In this paper we are mainly concerned with twisted sums of $c_0$ and $C(K)$, where $K$ is a compact space. These objects have a long tradition; for instance, if $\c A$ is an almost disjoint family of cardinality $\mathfrak c$, a kind of $\ell_p$ version of $C(K_{\c A})$ was introduced in \cite{johnlind}, and has been sometimes called a Johnson-Lindenstrauss space \cite[Theorem 4.10.a]{castgonz} or \cite[3.1]{ephe}; even if the standard practice is to call Johnson-Lindenstrauss space to the true $\ell_p$-versions. The classical Sobczyk theorem, $c_0$ is complemented in any separable Banach superspace, implies that $\Ext(C(K), c_0)=0$ whenever $K$ is metrizable. The question of whether
$\Ext(C(K), c_0)\neq 0$ for every non-metrizable $K$ was posed in \cite{ccy,ccky} and has been intensively studied \cite{Ca16,CT16,maple,avimar,Co18,CT18}. While for some classes of compacta $K$ one can demonstrate that $\Ext(C(K), c_0)\neq 0$ more or less effectively, the problem cannot be decided within the usual set theory.
Indeed, one has

\begin{theorem}\label{avimar}$\;$
	\begin{enumerate}[(a)]
	\item Assuming {\sf CH},   $\Ext(C(K),c_0) \neq 0$ for every non-metrizable compact space $K$ (see
\cite[Theorem 5.8]{avimar}).
	\item Assuming {\sf MA}$(\kappa)$,  $\Ext(C(K_{\c A}), c_0)=0$ whenever  $|\c A|=\kappa$
(\cite[Corollary 5.3]{maple}).
\item Assuming {\sf MA}$(\kappa)$,  $\Ext(C(K), c_0)=0$ whenever  $K$ is a separable scattered space of weight $\kappa$ and finite height
(this extension of (b) is due to Correa and Tausk  \cite[Corollary 4.2]{CT18}).
	\end{enumerate}
\end{theorem}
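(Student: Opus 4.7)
The three parts call for different techniques, so I would address each separately.

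For part (a), my plan is to construct a nontrivial twisted sum via a pullback. Under \textsf{CH}, Parovichenko's theorem yields for every compact $K$ of weight $\leq\mathfrak{c}$ a continuous surjection $\omega^*\to K$, where $\omega^*=\beta\omega\setminus\omega$, which dualizes to an isometric embedding $j\colon C(K)\hookrightarrow\ell_\infty/c_0$. Pulling back the quotient $q\colon\ell_\infty\to\ell_\infty/c_0$ along $j$ produces a twisted sum $0\to c_0\to \PB\to C(K)\to 0$; this sequence splits precisely when $j$ admits a bounded linear lifting $T\colon C(K)\to\ell_\infty$ with $qT=j$. The heart of the argument is to rule out such a lifting when $K$ is non-metrizable. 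One would choose the surjection $\omega^*\to K$ carefully and exploit \textsf{CH} to carry out a transfinite construction, producing inside $K$ a system of clopen partitions whose combinatorics cannot be tracked by any bounded linear operator into $\ell_\infty$; the non-metrizability of $K$ supplies the uncountable complexity that makes the obstruction work.

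For part (b), given a twisted sum $0\to c_0\to E\stackrel{q}{\to} C(K_{\cA})\to 0$, my plan is to build a bounded section of $q$ using \textsf{MA}($\kappa$). For each $A\in\cA$ let $A^*$ denote the associated clopen subset of $K_{\cA}$ and fix a bounded lift $x_A\in E$ of $\chi_{A^*}$. The task reduces to selecting corrections $c_A\in c_0$ so that $\chi_{A^*}\mapsto x_A+c_A$ extends to a bounded linear operator $C(K_{\cA})\to E$ splitting $q$. I would introduce a poset whose conditions are finite families $\{c_A:A\in\cA_p\}$ of finitely supported, uniformly bounded corrections satisfying prescribed linearity-up-to-$\varepsilon$ constraints on finite Boolean combinations. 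The critical step is to verify that this poset is ccc, which should follow from a $\Delta$-system argument exploiting the almost disjointness of $\cA$ to amalgamate conditions sharing a common root. Applying \textsf{MA}($\kappa$) to $\kappa$ dense sets indexed by $\cA$ then produces a coherent family of corrections assembling into the desired bounded splitting.

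For part (c), I would proceed by induction on the scattered height $h(K)$. The base case $h(K)=1$ reduces directly to (b), since $K$ is then the one-point compactification of a discrete space of size $\leq\kappa$, which is $K_\cA$ for the trivial almost disjoint family of singletons of $\omega$ of the appropriate cardinality. For the inductive step, let $K'$ be the Cantor--Bendixson derivative of $K$, yielding a short exact sequence $0\to c_0(K\setminus K')\to C(K)\to C(K')\to 0$ from the restriction map. Given a twisted sum $0\to c_0\to E\to C(K)\to 0$, one first uses the inductive hypothesis applied to $K'$ (still separable scattered, of height $h(K)-1$ and weight $\leq\kappa$) to split the induced twisted sum of $c_0$ with $C(K')$; then one adapts the forcing of (b) to handle the isolated stratum $K\setminus K'$, using the separability of $K$ to keep the construction ccc. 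The main obstacle across parts (b) and (c) is quantitative control, namely maintaining uniform norm bounds on the selected corrections throughout the ccc construction, since only bounded corrections can assemble into a bounded splitting.
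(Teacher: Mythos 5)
This theorem is not proved in the paper at all: all three parts are imported from the literature (\cite[Theorem 5.8]{avimar}, \cite[Corollary 5.3]{maple}, \cite[Corollary 4.2]{CT18}), so there is no internal proof to compare against and your sketch must stand on its own. Of the three parts, (b) is the closest to the known argument: reducing the splitting of $0\to c_0\to E\to C(K_{\cA})\to 0$ to an {\sf MA}($\kappa$)-uniformization of the almost disjoint family, via a ccc poset of finite, uniformly bounded partial corrections and a $\Delta$-system argument, is indeed the mechanism behind the quoted result (in \cite{maple} it is packaged through extension operators for the countable discrete extension $K_{\cA}$ of $\bA(\kappa)$). Even there, the step from ``linearity up to $\varepsilon$ on finite Boolean combinations'' to an honest bounded linear section of the quotient map is exactly where the quasi-linear/zero-linear machinery must be invoked; it is not automatic and you leave it unaddressed.

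The other two parts have genuine gaps. For (a), the pullback setup is fine --- under {\sf CH} Parovichenko yields a continuous surjection $\omega^*\to K$ for $K$ of weight at most $\con$, and the pullback of $\ell_\infty\to\ell_\infty/c_0$ along the induced embedding splits if and only if that embedding lifts to $\ell_\infty$ --- but the entire content of the theorem is the nonexistence of the lifting, and your proposed obstruction (``a system of clopen partitions inside $K$'') cannot work for a general non-metrizable $K$: a connected compactum such as $[0,1]^{\omega_1}$ has no nontrivial clopen sets at all. (There is also the point that for $K$ of weight greater than $\con$ the space $C(K)$ does not embed into $\ell_\infty/c_0$, so even the setup needs a reduction you do not supply.) The proof of \cite[Theorem 5.8]{avimar} is a genuine case analysis on the structure of non-metrizable compacta under {\sf CH}, not a single uniform combinatorial obstruction. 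For (c), the induction on height collapses at its first application: the Cantor--Bendixson derivative $K'$ of a separable scattered compactum is in general \emph{not} separable --- already $K_{\cA}'=\bA(\kappa)$ is non-separable for $\kappa>\omega$ --- so the inductive hypothesis cannot be applied to $K'$. This failure is precisely why (c) is a nontrivial extension of (b) and why Correa and Tausk work with a ``local extension property'' rather than a naive induction. Your base case is also off: a compactum of height $1$ is finite, the one-point compactification of an uncountable discrete set is not separable, and in fact a separable scattered compactum of uncountable weight must have height at least $3$.
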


\section{Preliminaries on compacta} \label{prel}

The most natural method of constructing twisted sums of $c_0$ and $C(K)$ is to consider a
 compact space $L$ of the form $L=K\cup\omega$,  containing  $K$ as a subspace  and additionally containing a countable infinite set of isolated points denoted simply by $\omega$ (we tacitly assume that $K\cap\omega=\emptyset$).
Such a space $L$ will be called a {\em countable discrete extension} of $K$.

Note that if $L$ is a countable discrete extension of $K$ then the subspace $Z=\{g\in C(L): g|K\equiv 0\}$  of $C(L)$
is a natural copy of $c_0$, whereas $C(L)/Z$ may be identified with $C(K)$.
This was fully discussed in \cite{maple} and \cite{avimar}. The following useful observation is a particular case of
 \cite[Theorem 2.8(a)]{maple} or \cite[Theorem 4.13]{avimar}.

\begin{theorem}\label{i:1}
	Suppose that $L=K\cup\omega$ is a countable discrete extension of a compact space $K$ such that $\omega$ is dense in $L$ (in other words,
	$L$ is a compactification of $\omega$ whose remainder is homeomorphic to $K$).
	If $K$ is not $ccc$, then $C(L)$ is a nontrivial twisted sum of $c_0$ and $C(K)$.
\end{theorem}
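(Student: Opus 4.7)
The plan is first to observe that the paragraph preceding the statement already supplies the exact sequence $0\to c_0\to C(L)\to C(K)\to 0$, where the copy of $c_0$ is $Z=\{g\in C(L):g|_K\equiv 0\}$ (isometric to $c_0(\omega)$ via evaluation on the isolated points, since the only accumulation of $\omega$ in $L$ occurs on $K$) and the quotient map is restriction, surjective by Tietze because $K=L\setminus\omega$ is closed in $L$. Hence the substantive task is to prove \emph{nontriviality}: no bounded linear section $E\colon C(K)\to C(L)$ with $(Ef)|_K=f$ can exist.

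Assume for contradiction that one does. The plan is to encode $E$ by the family of Radon measures $\mu_n\in C(K)^*$ defined by $\mu_n(f)=(Ef)(n)$ for $n\in\omega$; these satisfy $\|\mu_n\|\le\|E\|$ by Riesz representation. Density of $\omega$ in $L$ enters through the simple observation that, because $Ef$ is continuous on $L$, whenever $Ef$ takes a large value at some $x\in K$ there must exist $n\in\omega$ with $\mu_n(f)=(Ef)(n)$ almost as large.

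The non-ccc hypothesis then provides an uncountable pairwise disjoint family $\{U_\alpha:\alpha<\omega_1\}$ of nonempty open subsets of $K$. By normality of $K$, for each $\alpha$ I would pick $x_\alpha\in U_\alpha$ and a Urysohn function $f_\alpha\in C(K)$ with $0\le f_\alpha\le 1$, $f_\alpha(x_\alpha)=1$ and $f_\alpha$ vanishing off $U_\alpha$. Since $(Ef_\alpha)(x_\alpha)=1$ with $Ef_\alpha$ continuous on $L$ and $\omega$ dense, there is $n_\alpha\in\omega$ with $\mu_{n_\alpha}(f_\alpha)=(Ef_\alpha)(n_\alpha)>1/2$; combined with $\supp f_\alpha\subseteq U_\alpha$ and $|f_\alpha|\le 1$ this forces $|\mu_{n_\alpha}|(U_\alpha)>1/2$.

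A pigeonhole argument finishes the proof: since $\alpha\mapsto n_\alpha$ sends the uncountable set $\omega_1$ into the countable set $\omega$, some $n^*\in\omega$ satisfies $n_\alpha=n^*$ for infinitely many $\alpha$, whence $|\mu_{n^*}|$ assigns mass greater than $1/2$ to each of infinitely many pairwise disjoint subsets of $K$, contradicting $\|\mu_{n^*}\|<\infty$. The main obstacle I foresee is conceptual rather than technical: bridging a purely topological hypothesis about $K$ (failure of ccc) with an operator-theoretic conclusion about $C(L)$. The bridge is the countable family of measures $\mu_n$, and density of $\omega$ is exactly what makes this bridge informative.
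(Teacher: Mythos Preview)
Your proof is correct. The paper itself does not prove this theorem; it simply records it as a particular case of results in \cite{maple} and \cite{avimar}. Your direct argument---encoding a putative extension operator $E$ by the measures $\mu_n=\delta_n\circ E$, using density of $\omega$ to locate an $n_\alpha$ with $|\mu_{n_\alpha}|(U_\alpha)>\tfrac12$, and then pigeonholing the uncountable family $(n_\alpha)$ into $\omega$---is precisely the standard proof, and indeed the paper reproduces the same mechanism in Section~\ref{appendix} for the $c_0(\kappa)$ variant (there the disjoint open sets are the isolated singletons $\{A\}$ in $K_\cA'$ and the pigeonhole runs over $\kappa$ instead of $\omega$).
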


Our solution to Problem 5 is motivated by examining the variety of twisted sums of $c_0$ and $C(K)$, where $K$ is either the Stone space constructed from an almost disjoint family of subsets of $\omega$, or $K=\b S$, where $\b S$ denotes the classical double arrow space (see below).

\subsection{The Stone compact of an almost disjoint family} Recall that a topological space is scattered if every of its subsets contains an isolated point
 (see \cite{So03} for the basic properties and further references). We consider here some scattered compact spaces $K$; $K'$ is the first derivative (the set
 of nonisolated points in $K$). Higher Cantor-Bendixon derivatives  $K^{(\alpha)}$ are defined inductively; the height of $K$ is the first ordinal number $\alpha$ for which $K^{(\alpha)}=\emptyset$.

Recall that a family $\c A$ of infinite subsets of $\omega$ is almost disjoint if $A\cap B$ is finite for any distinct $A,B\in \c A$.
To every almost disjoint family $\c A$ one can associate a certain scattered compact space $K_\cA$ of height 3.
The space $K_\cA$ may be defined as
the Stone space of the algebra of subsets of $\omega$ generated by $\c A$ and all finite sets; alternatively,
\[K_\c A=\omega\cup \{A:A\in\c A\}\cup\{\infty\},\]
where points in $\omega$ are isolated, basic open neighborhoods of a given point $A\in\cA$ are of the form $\{A\}\cup (A\setminus  F)$ with $F\subseteq \omega$ finite, and $K_\c A$ is the one point compactification of
the locally compact space $\omega\cup \c A$. This means that $K_\cA$ is formed by adding a single point $\infty\notin\omega\cup \c A$
and declaring that the local base at $\infty$ consists of sets of the form
$\{\infty\}\cup V$, where $V$ is an open subset of $\omega\cup \c A$ with  compact complement.

We write $\bA(\kappa)$ for the (Aleksandrov) one-point compactification
of a discrete space of cardinality $\kappa$.  The space $K_\c A$ might be called the Aleksandrov-Urysohn compactum associated to
 an almost disjoint family $\c A$; this terminology was used in \cite{marcpol}, \cite{maple} and \cite{avimar}; see also \cite[section 2]{DV10}
(most often, spaces of the form $K_\c A$ are called  Mr\'owka spaces).
  Hru\v{s}\'ak \cite{Hr14} and Hern\'andez-Hern\'andez and  Hru\v{s}\'ak \cite{HH18}
  survey  various applications of that construction to topology and functional analysis.

Given a cardinal number $\kappa$, $c_0(\kappa)$ stands for the Banach space of all functions $f:\kappa\to\er$ having the property
that the set $\{\alpha<\kappa: |f(\alpha)|\ge \varepsilon\}$ is finite for every positive $\varepsilon$ (with the usual supremum norm).
Note that $c_0(\kappa)$ may be seen as a hyperplane in $C(\b A(\kappa))$ and that $c_0(\kappa)$ is isomorphic to $C(\b A(\kappa))$.
Moreover, if $|{\c A}|=\kappa$ then $K_\c A$ is a countable discrete extension of the space  $\bA(\kappa)$.
Hence, the following is a direct consequence of  Theorem \ref{i:1}.

\begin{theorem}\label{tcc:1}
Given any  almost disjoint family $\c A$ of cardinality $\kappa>\omega$, the space $C(K_\c A)$ is a nontrivial twisted sum of $c_0$ and $c_0(\kappa)$.
\end{theorem}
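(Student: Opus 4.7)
The plan is to verify the hypotheses of Theorem \ref{i:1} with $K=\bA(\kappa)$ and $L=K_\cA$, and then invoke the isomorphism $C(\bA(\kappa))\simeq c_0(\kappa)$ already recorded in the text. This breaks into three routine checks: that $K_\cA$ is a countable discrete extension of $\bA(\kappa)$, that the countable discrete part is dense, and that $\bA(\kappa)$ is not ccc.

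First I would observe that the subspace $\{\infty\}\cup\cA$ of $K_\cA$ is homeomorphic to $\bA(\kappa)$. Indeed, a basic neighborhood of $A\in\cA$ in $K_\cA$ is $\{A\}\cup(A\setminus F)$ with $F\subseteq\omega$ finite, which meets $\{\infty\}\cup\cA$ only in $\{A\}$, so each such $A$ is isolated in the subspace. A basic neighborhood of $\infty$ in $K_\cA$ is the complement of a compact subset of $\omega\cup\cA$, and compact subsets of $\omega\cup\cA$ meet $\cA$ in a finite set; so the trace at $\infty$ is the cofinite filter on $\cA$ together with $\{\infty\}$, which is exactly the local base of the point at infinity in $\bA(\kappa)$. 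Since the isolated points of $K_\cA$ are precisely the integers in $\omega$, we get $K_\cA=\bA(\kappa)\cup\omega$ as a countable discrete extension.

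Next I would check density of $\omega$ in $K_\cA$. Any basic neighborhood $\{A\}\cup(A\setminus F)$ of a point $A\in\cA$ contains the infinite subset $A\setminus F$ of $\omega$; any basic neighborhood of $\infty$ contains a cofinite subset of $\omega$ by the same compactness observation as above. Hence $\omega$ is dense in $K_\cA$. Finally, because $\kappa>\omega$, the family $\{\{A\}:A\in\cA\}$ is an uncountable pairwise disjoint family of non-empty open sets in $\bA(\kappa)$, so $\bA(\kappa)$ fails the ccc.

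With these three ingredients in hand, Theorem \ref{i:1} gives at once that $C(K_\cA)$ is a nontrivial twisted sum of $c_0$ and $C(\bA(\kappa))$, and since $C(\bA(\kappa))\simeq c_0(\kappa)$ the proof is complete. I do not expect any genuine obstacle: the theorem is packaged precisely so that this becomes a direct verification, and the only conceptual point is the identification of $\bA(\kappa)$ as the ``remainder'' of $K_\cA$ along the dense discrete set $\omega$.
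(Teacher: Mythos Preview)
Your proof is correct and follows exactly the route the paper intends: the paper records that $K_\cA$ is a countable discrete extension of $\bA(\kappa)$ and then states the theorem as ``a direct consequence of Theorem \ref{i:1}'', leaving the verifications you spell out (identification of the remainder, density of $\omega$, failure of ccc for $\bA(\kappa)$) as implicit. You have simply made those three checks explicit, and each one is handled correctly.
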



\subsection{The double arrow space.} \label{das}
Actually, we mention here a single compactum, the double arrow space, sometimes called the split interval, and later (in Sections 5 and 7) we shall consider the class of its countable discrete extensions.
We  denote this classical space by $\b S$; recall that
\[ \b S=\big((0,1]\times \{0\}\big)\cup \big([0,1)\times\{1\}\big),\]
is equipped with the order topology given by the
lexicographical order
$$ \la s,i\ra \prec \la t,j\ra  \mbox{ if either } s<t, \mbox{  or }
s=t \mbox{ and } i<j.$$

The space $\b S$ is a nonmetrizable separable compactum having a countable local base at every point.
It will be convenient to see $\b S$ as the Stone space of some algebra of subsets of a countable set.
Let $Q$ be a countable dense subset of $(0,1)$, and for each $x\in (0,1)$ put $P_x=\{q\in Q: q\le x\}$.
Let $\mathfrak A$ be the algebra of subsets of $Q$ generated by the chain $\{ P_x: x\in (0,1)\}$.
Then the Stone space $\ult(\mathfrak A)$, of all ultrafilters on the algebra $\mathfrak A$, is homeomorphic to $\b S$.

To see this, note first that every $\c F\in\ult(\mathfrak A)$ is uniquely determined by the set
\[ I(\c F)=\{x\in (0,1): P_x\in\c F\},\]
which is a subinterval
of $(0,1)$ of the form $[y,1)$ or $(y,1)$ for some $y\in [0,1]$.
Put $\c F=\c F_y^+$ if $I(\c F)=[y,1)$ and $\c F=\c F_y^-$ if $I(\c F)=(y,1)$, and define $h:\b S\to \ult(\mathfrak A)$ by

\begin{equation*}
h(y,i) =\begin{cases}
\c F_y^- & \text{ if } y\in [0,1), i=1;\\
\c F_y^+& \text{ if } y\in (0,1], i=0.

\end{cases}
\end{equation*}
Then $h$ is an homeomorphism of $\b S$ and $\ult(\mathfrak A)$.

\section{Solution to Problems 3 and 4}

 We start by recalling the following well-known observation:

 \begin{remark}\label{observation}
 If $K$ is an  infinite scattered compact space then $K$ contains a nontrivial convergent sequence, and so  $C(K)\simeq c_0\oplus C(K)$.
\end{remark}

\begin{proof}[Proof of Theorem \ref{abc} (a) and (b)] Pick two almost disjoint families $\c A, \c B $ of subsets of $\omega$ of cardinality $\kappa<\mathfrak c$.
Under {\sf MA}($\kappa$), we have
$$\Ext(C(K_{\c A}), c_0)=0=\Ext(C(K_{\c B}), c_0),$$
thanks to Theorem \ref{avimar} (b). Hence, if we call $\texttt a$ (resp. $\texttt  b$) the two exact sequences in the diagram
$$\begin{CD}
0@>>> c_0 @>>> C(K_{\c A}) @>{\rho_{\c A}}>> c_0(\kappa)@>>> 0\\
&&&&&&@|\\
0@>>> c_0 @>>> C(K_{\c B}) @>>{\rho_{\c B}}> c_0(\kappa)@>>> 0\\
\end{CD}$$
then $\texttt a \rho_{\c B}\equiv 0\equiv \texttt  b\rho_{\c A}$ and thus by Proposition \ref{semi}(2) and Remark \ref{observation}
$$C(K_{\c A})\simeq c_0 \oplus C(K_{\c A}) \simeq c_0\oplus C(K_{\c B}) \simeq  C(K_{\c B}).$$

Part (b) is consequence of the well-known algebraic identity
$$\Ext(C(K_{\c A})\oplus C(K_{\c A}), c_0)= \Ext(C(K_{\c A}), c_0) \times \Ext(C(K_{\c A}), c_0),$$
and the following\medskip

\noindent \textbf{Claim.} \emph{If $X$ is a twisted sum of $c_0$ and $c_0(\kappa)$ so that $\Ext(X, c_0)=0$ then $X\simeq C(K_{\c A})$}.\medskip

\noindent \emph{Proof of the Claim.} Assume the existence of an exact sequence
$$\begin{CD} 0@>>> c_0@>>> X @>\rho>> c_0(\kappa)@>>> 0\end{CD}$$

The same argument as before yields $X\oplus c_0 \simeq C(K_{\c A}) \oplus c_0 \simeq C(K_{\c A})$. All that is left to see is that the space $X$ has a complemented copy of $c_0$. Indeed, it follows from \cite{castsimo} that every twisted sum space $X$ as above has Pe\l czy\'nski's property (V). Therefore, there is a copy $X_0$ of $c_0$ in $X$ such that the restriction $\rho|_{X_0}$ is an isomorphism.  Since $\rho(X_0)$ must be necessarily complemented in $c_0(\kappa)$, $c_0$ will also be complemented in $X$. Hence $X\simeq c_0\oplus X$ and the proof concludes.\end{proof}

We do not know whether every twisted sum space
$$\begin{CD} 0@>>> c_0@>>> X @>\rho>> c_0(\kappa)@>>> 0\end{CD}$$
must be isomorphic to a $C(K)$-space. However,

\begin{prop} Let $\c A$ be an almost disjoint family of subsets of $\omega$ such that $|\c A|=\kappa<\mathfrak c$.
Under {\sf MA}($\kappa$),  every twisted sum space $X$
$$\begin{CD} 0@>>> c_0@>>> X @>>> c_0(\kappa)@>>> 0\end{CD}$$
is a quotient of $C(K_{\c A})$.
\end{prop}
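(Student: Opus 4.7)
The plan is to exhibit $X$ as a quotient of $C(K_{\c A})$ by passing to the pull-back along the quotient map $\rho_{\c A}:C(K_{\c A})\to c_0(\kappa)$ and invoking the vanishing of $\Ext(C(K_{\c A}),c_0)$, which holds under {\sf MA}($\kappa$) by Theorem \ref{avimar}(b).

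Concretely, starting from the given twisted sum
$$\mathtt x :\quad 0\to c_0\to X\stackrel{\rho}\to c_0(\kappa)\to 0,$$
I would form its pull-back along $\rho_{\c A}$. Writing $\PB=\{(x,z)\in X\oplus_\infty C(K_{\c A}):\rho(x)=\rho_{\c A}(z)\}$, one obtains the commutative diagram
$$
\begin{CD}
0 @>>> c_0 @>>> \PB @>>> C(K_{\c A}) @>>> 0\\
&& @| @VV\pi V @VV\rho_{\c A} V\\
0 @>>> c_0 @>>> X @>>\rho > c_0(\kappa) @>>> 0
\end{CD}
$$
where $\pi$ is the projection $(x,z)\mapsto x$.

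Theorem \ref{avimar}(b) yields $\Ext(C(K_{\c A}),c_0)=0$, so the upper row splits and thus $\PB\simeq c_0\oplus C(K_{\c A})$. By Remark \ref{observation}, the scattered compactum $K_{\c A}$ contains a nontrivial convergent sequence, so $c_0\oplus C(K_{\c A})\simeq C(K_{\c A})$; hence $\PB\simeq C(K_{\c A})$.

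Finally I would verify that $\pi:\PB\to X$ is surjective: given $x\in X$, the surjectivity of the quotient map $\rho_{\c A}$ provides $z\in C(K_{\c A})$ with $\rho_{\c A}(z)=\rho(x)$, so that $(x,z)\in \PB$ and $\pi(x,z)=x$. The open mapping theorem then promotes $\pi$ to a quotient operator, exhibiting $X$ as a quotient of $\PB\simeq C(K_{\c A})$. The only load-bearing step is the appeal to Theorem \ref{avimar}(b), which absorbs the full strength of {\sf MA}($\kappa$); the remaining ingredients are formal properties of pull-backs, so I do not expect any further obstacle.
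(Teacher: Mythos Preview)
Your argument is correct and is essentially the same as the paper's: both hinge on the vanishing $\Ext(C(K_{\c A}),c_0)=0$ from Theorem~\ref{avimar}(b) to produce a surjection $c_0\oplus C(K_{\c A})\to X$, and then absorb the extra $c_0$ via Remark~\ref{observation}. The only cosmetic difference is that you work directly with the pull-back $\PB$ and check the projection $(x,z)\mapsto x$ is onto, whereas the paper first invokes Proposition~\ref{semi}(1) to realize $\texttt x$ as a push-out $\tau\texttt a$ and then reads off the canonical exact sequence $0\to c_0\to c_0\oplus C(K_{\c A})\to X\to 0$ coming from the push-out construction; your route is marginally more self-contained.
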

\begin{proof} Since $\Ext(C(K_{\c A}), c_0)=0$ it follows from Proposition \ref{semi} (1) that there is a commutative diagram
$$\begin{CD}
0@>>> c_0 @>>> C(K_{\c A}) @>>> c_0(\kappa)@>>> 0\\
&&@VVV @VVV@|\\
0@>>> c_0 @>>> X @>>> c_0(\kappa)@>>> 0
\end{CD}$$

By the definition of the push-out space one has an exact sequence $$\begin{CD}
0@>>> c_0 @>>> c_0\oplus C(K_{\c A}) @>>> X@>>> 0
\end{CD}$$
Now, the obvious isomorphism $C(K_{\c A}) \simeq c_0\oplus C(K_{\c A})$ proves the assertion. \end{proof}

The following result contains a generalization of  Theorem \ref{abc}(a).

\begin{theorem}\label{ct:1} Assume ${\sf MA}(\kappa)$ and let $K_i$, $i= 0,1$ be separable scattered compacta of finite height and weight $\kappa$.
\begin{enumerate}
\item If $C(K_0')\simeq C(K_1')$ then $C(K_0)\simeq C(K_1)$.
\item If $C(K_0)\simeq C(K_1)$ then $X_0\simeq X_1$ whenever $X_i$, $i=0,1$, is a twisted sum $0\to c_0 \to X_i\to C(K_i)\to 0$.
\item Moreover, all iterated twisted sums of $X_{K_i}$, $i=0,1$ and $c_0$ are also isomorphic.
\end{enumerate}
\end{theorem}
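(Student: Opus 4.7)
The starting observation is topological: every separable scattered compactum $K$ of finite height has a dense countable set of isolated points, so $K = K' \cup \omega$ is a countable discrete extension of its Cantor--Bendixson derivative $K'$ in the sense of Section 3. Restriction to $K'$ then yields the natural exact sequence
$$\texttt{z}_K: \quad 0 \to c_0 \to C(K) \overset{\rho_K}{\longrightarrow} C(K') \to 0,$$
in which $c_0$ appears as the subspace of functions vanishing on $K'$: the kernel of restriction is supported on the countable set of isolated points, continuity at points of $K'$ forces those values to lie in $c_0$, and surjectivity of restriction follows from Tietze. Combined with ${\sf MA}(\kappa)$ and Theorem \ref{avimar}(c), which gives $\Ext(C(K),c_0)=0$ under our standing hypotheses on $K$, this is essentially all the input we need.

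For (1), given $C(K_0')\simeq C(K_1')$, fix an isomorphism and use it to identify the right-hand spaces of $\texttt{z}_{K_0}$ and $\texttt{z}_{K_1}$. Then $\texttt{z}_{K_1}\rho_{K_0}\in \Ext(C(K_0),c_0)=0$ and symmetrically $\texttt{z}_{K_0}\rho_{K_1}\equiv 0$. Proposition \ref{semi}(2) now gives $c_0\oplus C(K_0)\simeq c_0\oplus C(K_1)$, and absorbing $c_0$ via Remark \ref{observation} yields $C(K_0)\simeq C(K_1)$; this is formally the argument of Theorem \ref{abc}(a) stripped of the specific form $K=K_{\c A}$. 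For (2), the same vanishing of $\Ext(C(K_i),c_0)$ forces every twisted sum $X_i$ to split as $X_i\simeq c_0\oplus C(K_i)$, so the hypothesis $C(K_0)\simeq C(K_1)$ yields $X_0\simeq c_0\oplus C(K_0)\simeq c_0\oplus C(K_1)\simeq X_1$ at once.

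For (3), I induct on the number of $c_0$-extensions performed. The key observation is that $X_i\simeq c_0\oplus C(K_i)\simeq C(L_i)$ where $L_i:=K_i\sqcup(\omega+1)$ is \emph{again} a separable scattered compactum of finite height and weight $\kappa$; hence Theorem \ref{avimar}(c) reapplies to give $\Ext(X_i,c_0)=0$, and any further extension $0\to c_0\to W_i\to X_i\to 0$ splits into $W_i\simeq c_0\oplus X_i\simeq X_i$. The inductive step is identical: at each stage, the current iterated twisted sum is isomorphic to $c_0\oplus C(K_i)$, hence to some $C(L)$ with $L$ still in the class covered by Theorem \ref{avimar}(c), so the next $c_0$-extension splits and the isomorphism type is preserved. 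Combined with (2), this delivers the claimed common isomorphism type across $i=0,1$.

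The main obstacle is not conceptual but bookkeeping: one must carefully verify the closure property that the class \emph{separable scattered compactum of finite height and weight $\kappa$} is stable under the operation $L\mapsto L\sqcup(\omega+1)$ that implicitly appears each time we add a $c_0$-summand in the induction of (3). This ensures Theorem \ref{avimar}(c) stays available at every stage. The verification itself is routine (separability and scatteredness pass to countable disjoint unions with $\omega+1$, the height goes up by at most one from the base height, and the weight remains $\kappa$), but it is the load-bearing step that makes the iteration legitimate.
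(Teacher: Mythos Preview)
Your proof is correct. Part (1) is exactly the paper's argument. For (2), however, you take a shortcut the paper does not: since $\Ext(C(K_i),c_0)=0$, the defining sequence $0\to c_0\to X_i\to C(K_i)\to 0$ splits \emph{directly}, giving $X_i\simeq c_0\oplus C(K_i)$ at once. The paper instead first establishes $\Ext(X_i,c_0)=0$ via a 3-space argument, runs the diagonal principle of Proposition~\ref{semi}(2) a second time, and then separately verifies that $c_0$ is complemented in $X_i$ using a pullback and Sobczyk's theorem; your route is shorter and loses nothing. For (3), the paper's proof only treats (1) and (2) explicitly, while you supply an explicit induction. Your device of realizing $c_0\oplus C(K_i)$ as $C(K_i\sqcup(\omega+1))$ so that Theorem~\ref{avimar}(c) reapplies is correct, though slightly indirect: once you know $X_i\simeq c_0\oplus C(K_i)$, the identity $\Ext(c_0\oplus C(K_i),c_0)=\Ext(c_0,c_0)\times\Ext(C(K_i),c_0)=0$ (used elsewhere in the paper) or the 3-space argument gives the same conclusion without touching topology. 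A minor point: the height of $K_i\sqcup(\omega+1)$ does not increase at all, since $\omega+1$ has height $2\le$ height of $K_i$; your ``at most one'' is true but loose.
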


\begin{proof} By Remark \ref{observation},  $C(K_i)\simeq c_0\oplus C(K_i)$. Moreover, one has exact sequences
$$\begin{CD}
0@>>>c_0 @>>> C(K_i) @>R>> C(K_i')@>>>0\end{CD}$$
where $R$ is the natural restriction map, which yields $\ker R\simeq c_0$. Consider the two exact sequences
$$\begin{CD}
0@>>> c_0 @>>> C(K_0) @>\alpha R>> C(K_1')@>>> 0\\
&&&&&&@|\\
0@>>> c_0 @>>> C(K_1) @>>R> C(K_1') @>>> 0
\end{CD}$$
where $\alpha: C(K_0')\to C(K_1')$ is an isomorphism. Let $\texttt z_1$ denote the lower sequence and
$\texttt z_0$ the upper sequence. Since $K_i$ is a separable scattered compact of finite height, Theorem \ref{avimar}(c)  implies
$\Ext(C(K_i), c_0)=0$, and thus $\texttt z_0 R\equiv 0\equiv \texttt z_1 \alpha \R$. Therefore,
$$C(K_0)\simeq  c_0\oplus C(K_0)\simeq c_0\oplus C(K_1)\simeq C(K_1),$$
and this proves (1). To prove (2), consider the two exact sequences $0\to c_0 \to X_i\stackrel{Q_i}\to C(K_i)\to 0$ and let $\alpha: C(K_0)\to C(K_1)$ be an isomorphism. Recall that $\Ext(X_i, c_0)=0$ by a 3-space argument (see \cite{cabecastlong}) and thus a similar reasoning as above can be used  with the two sequences
$$\begin{CD}
0@>>> c_0 @>>> X_0 @>\alpha Q_0 >> C(K_1)@>>> 0\\
&&&&&&@|\\
0@>>> c_0 @>>> X_1 @>>Q_1> C(K_1) @>>> 0
\end{CD}$$
The spaces $X_i$ obviously contain complemented copies of $c_0$ since $C(K_i)$ does so and the pull-back sequence
$$\begin{CD}
0@>>> c_0 @>>> X_i @> Q_i >> C(K_i)@>>> 0\\
&&@| @AAA @AAA\\
0@>>> c_0 @>>> c_0\oplus c_0 @>>> c_0 @>>> 0
\end{CD}$$
splits by Sobczyk's theorem. Therefore $X_i \simeq c_0\oplus X_i$.\end{proof}

\begin{example} The separability assumption in Proposition \ref{ct:1} is essential, and for that reason we cannot extend the result to higher derived sets in an obvious way. Indeed, take $\kappa <{\mathfrak c}$ and let $K_0$ be the subset of those elements $x$ of the Cantor cube $2^\kappa$ for which the support  $\{\xi<\kappa: x(\xi)\neq 0\}$ has at most 2 elements.
Take any   almost disjoint family $\c A$ of size $\kappa$ and $K_{\cA}$ as the second compactum.
Then $K_0'=\bA(\kappa)=K_{\c A}'$; however,
$C(K_0)\not\simeq C(K_{\c A})$ since $C(K_0)$ is weakly compactly generated (in other words, $K_0$ is Eberlein compact) while the latter space is not
($K_{\c A}$ is not Eberlein compact since it is separable,  but not metrizable).
\end{example}

Since each  scattered compact space $K$ of height $2$ is a finite sum of one-point compactifications of discrete spaces, the corresponding function space $C(K)$ is isomorphic to $c_0(|K|)$. Hence, from Theorem \ref{ct:1}  we  obtain the following corollaries:

\begin{corollary}
Assuming ${\sf MA}(\kappa)$, if $K_0$ and $K_1$ are separable scattered compact spaces of height $3$ and weight $\kappa$, then $C(K_0)$ and $C(K_1)$ are  isomorphic.
\end{corollary}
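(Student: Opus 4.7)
The plan is to invoke Theorem~\ref{ct:1}(1): since $K_0,K_1$ are separable scattered compacta of finite height and weight $\kappa$, it suffices to exhibit an isomorphism $C(K_0')\simeq C(K_1')$. The height~$3$ assumption on each $K_i$ makes $K_i'$ a scattered compactum of height~$2$, so the remark immediately preceding the Corollary gives $C(K_i')\simeq c_0(|K_i'|)$. Because $c_0(\lambda)\simeq c_0(\mu)$ forces $\lambda=\mu$ for infinite cardinals, the whole problem collapses to checking that $|K_0'|=|K_1'|=\kappa$.

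To see $|K_i'|=\kappa$ (when $\kappa>\aleph_0$), I would pass to the Boolean algebra $\mathfrak A_i$ of clopens of $K_i$; scattered compacta are zero-dimensional, and hence $|\mathfrak A_i|=w(K_i)=\kappa$. The clopens of $K_i'$ are identified with the quotient $\mathfrak A_i/I$, where $I$ is the ideal of clopen subsets of $K_i$ that miss $K_i'$, i.e.\ those contained in the set of isolated points of $K_i$. By separability this set of isolated points is countable and, being open and discrete in $K_i$, any compact subset of it is finite; therefore $I$ consists of exactly the finite subsets of the isolated-points set, giving $|I|=\aleph_0$. Since each fiber of the quotient map $\mathfrak A_i\to\mathfrak A_i/I$ has size $\aleph_0$, one gets $|\mathfrak A_i/I|=\kappa$; combined with the structural description $K_i'=\bigsqcup_{j\le n}\b A(\lambda_j)$ (which forces $|K_i'|=w(K_i')$), this yields $|K_i'|=\kappa$.

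Together these give $C(K_0')\simeq c_0(\kappa)\simeq C(K_1')$, and Theorem~\ref{ct:1}(1) delivers the desired $C(K_0)\simeq C(K_1)$. The remaining case $\kappa=\aleph_0$, in which \textsf{MA}($\aleph_0$) is provable in \textsf{ZFC}, I would handle separately: every $K_i$ is then a compact Hausdorff space of countable weight, hence metrizable and countable, so by the Bessaga--Pe\l czy\'nski classification of $C([0,\alpha])$ for countable ordinals $\alpha$ one has $C(K_0)\simeq c_0\simeq C(K_1)$ directly. The main technical hurdle throughout is the cardinality computation $|K_i'|=\kappa$, the crisp ingredient making it work being that a compact subset of a discrete space is finite.
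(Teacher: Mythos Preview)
Your proof is correct and follows the same route as the paper: reduce to Theorem~\ref{ct:1}(1) by checking $C(K_0')\simeq C(K_1')$, using that a height-$2$ scattered compactum $K$ has $C(K)\simeq c_0(|K|)$. The paper leaves the verification $|K_0'|=|K_1'|=\kappa$ entirely implicit, so your added detail is welcome; however, your Boolean-algebra computation is more elaborate than necessary. Since $K_i$ is scattered, $|K_i|=w(K_i)=\kappa$, and separability forces the (dense, discrete) set $K_i\setminus K_i'$ of isolated points to be countable, so $|K_i'|=\kappa$ directly once you know $K_i'$ is infinite (which follows from height $3$). This same argument also covers $\kappa=\aleph_0$: height $3$ forces $K_i'$ to have height $2$, hence to be infinite, so $|K_i'|=\aleph_0$ and Theorem~\ref{ct:1}(1) applies without detouring through Bessaga--Pe\l czy\'nski.
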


\begin{corollary}
Assuming ${\sf MA}(\kappa)$, if $K$ is a separable scattered compact space of height 3 and weight $\kappa$, then $c_0(C(K))$
 (the $c_0$--direct sum of $C(K)$)
 is isomorphic to $C(K)$. In particular, $C(K)$ is isomorphic to its square.
\end{corollary}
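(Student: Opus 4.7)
The plan is to realize $c_0(C(K))$ as $C(L)$ for a suitable separable scattered compactum $L$ of finite height and weight $\kappa$, and then invoke Theorem \ref{ct:1}(1) to identify $C(L)$ with $C(K)$.

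First I would let $L$ be the one-point compactification of the disjoint union $\bigsqcup_{n<\omega} K_n$ of countably many copies $K_n$ of $K$, and denote the extra point by $\infty$. Every $f\in C(L)$ is uniquely determined by the value $c=f(\infty)$ together with the sequence of restrictions $f_n=f|_{K_n}\in C(K_n)$, subject to $\|f_n-c\|_\infty\to 0$. The assignment $(c,(f_n))\mapsto (c,(f_n-c))$ is a bounded linear isomorphism between $C(L)$ and $\mathbb R\oplus c_0(C(K))$, and since $c_0(C(K))$ contains $c_0$ as a complemented subspace the one-dimensional summand is absorbed; thus $C(L)\simeq c_0(C(K))$.

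Next I would verify that $L$ satisfies the hypotheses of Theorem \ref{ct:1}(1). Separability of $L$ is immediate, and a direct computation of the derivatives gives $L^{(1)}=\bigsqcup_n K_n'\cup\{\infty\}$, $L^{(2)}=\bigsqcup_n K_n^{(2)}\cup\{\infty\}$, $L^{(3)}=\{\infty\}$ and $L^{(4)}=\emptyset$, so $L$ has height $4$ and weight $\kappa$. To match the derivatives, note that $K'$ is a scattered compactum of height $2$, hence a finite disjoint union of one-point compactifications of discrete spaces, so $C(K')\simeq c_0(\lambda)$ for some $\lambda\le\kappa$. Since $L'$ is itself the one-point compactification of $\bigsqcup_n K_n'$, the same construction as in the first step yields $C(L')\simeq \mathbb R\oplus c_0(C(K'))\simeq c_0(\lambda\times\omega)\simeq c_0(\lambda)\simeq C(K')$. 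Theorem \ref{ct:1}(1), applied to the separable scattered compacta $K$ and $L$ of finite height and weight $\kappa$, then yields $C(L)\simeq C(K)$, and combined with the first step this gives the desired $c_0(C(K))\simeq C(K)$. The statement about the square follows immediately from $C(K)\simeq c_0(C(K))\simeq C(K)\oplus c_0(C(K))\simeq C(K)\oplus C(K)$.

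The delicate point, and what I would regard as the main obstacle, is that $L'$ need not be separable, since its set of isolated points has cardinality $\kappa$; this prevents any naive inductive reduction on the height. What rescues the argument is that Theorem \ref{ct:1}(1) only requires an isomorphism of $C(K_0')$ and $C(K_1')$ as Banach spaces—not any topological agreement of the derivatives themselves—so the computation $C(L')\simeq c_0(\lambda)\simeq C(K')$ is enough to close the loop.
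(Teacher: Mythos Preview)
Your proof is correct and follows essentially the same route as the paper: both arguments realize $c_0(C(K))$ as $C(L)$ for a separable scattered compactum $L$ of finite height and weight $\kappa$, observe that the quotient side collapses because $c_0(c_0(\kappa))\simeq c_0(\kappa)$ (in your language, $C(L')\simeq c_0(\lambda)\simeq C(K')$), and then appeal to the ${\sf MA}(\kappa)$ machinery based on $\Ext(\,\cdot\,,c_0)=0$. The only cosmetic differences are that the paper represents $c_0(C(K))$ as $C(\bA(\omega)\times K)$ rather than via the one-point compactification, and that it invokes the diagonal principle directly on the two exact sequences instead of quoting Theorem~\ref{ct:1}(1); your packaging via Theorem~\ref{ct:1}(1) is in fact a bit cleaner.
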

\begin{proof} Note that $c_0(C(K))$ is isomorphic to $C(\bA(\omega)\times K)$ and such compact space has finite height, so we  infer from \cite[Corollary 4.2]{CT18}  that $\Ext(c_0(C(K)), c_0)=0$. On the other hand, we have the following diagram
$$\begin{CD}
0@>>> c_0 @>>> c_0(C(K)) @>>> c_0(c_0(|K|))@>>> 0\\
&&&&&&@|\\
0@>>> c_0 @>>> C(K) @>>> c_0(|K|)) @>>> 0
\end{CD}$$
Hence, the diagonal principles (see Proposition \ref{semi}) yield $c_0 \oplus C(K) \sim c_0 \oplus c_0(C(K))$. Since $C(K)$ contains $c_0$ complemented, and so does $c_0(C(K))$, we are done. In particular, $C(K)$ is isomorphic to its square, because $c_0(C(K))$ has this property.
\end{proof}

\section{Counting non-isomorphic $C(K)$-spaces}\label{counting}

For any compact space $K$ we identify, as usual,  the dual space $C(K)^\ast$ with the space $M(K)$, of all signed regular Borel measures on $K$ having
finite variation. We start by  the following general result on the isomorphism types of $C(K)$ spaces.

\begin{theorem}\label{counting:1}
Let $\K$ be a family of compact spaces such that

\begin{enumerate}[(i)]
\item $K$ is separable and $|M(K)|=\con$ for every $K\in \K$;
\item For every pair of distinct $K,L\in\K$ one has $C(K)\simeq C(L)$ and $K,L$ are not homeomorphic.
\end{enumerate}
Then $\K$ is of cardinality at most $\con$.
\end{theorem}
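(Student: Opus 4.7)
The plan is to fix a reference member $K_0\in\K$, put $Z=C(K_0)$, and use the common isomorphism class of the $C(K)$, $K\in\K$, to embed every $K\in\K$ homeomorphically as a weak$^*$-compact subset of the single dual $Z^*$; counting such subsets will then yield the bound. By the Riesz representation theorem $Z^*$ is isometric to $M(K_0)$, so assumption (i) gives $|Z^*|=\con$.

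For each $K\in\K$ I pick an isomorphism $T_K:Z\to C(K)$ and take its adjoint $T_K^*:M(K)\to Z^*$, which is injective and weak$^*$-to-weak$^*$ continuous. Define $e_K:K\to Z^*$ by $e_K(x)=T_K^*(\delta_x)$. The Dirac embedding $x\mapsto\delta_x$ is a weak$^*$-continuous injection of $K$ into $M(K)$, so $e_K$ is a weak$^*$-continuous injection of the compactum $K$ into $Z^*$, hence a homeomorphism onto the weak$^*$-compact set $e_K(K)\subseteq Z^*$. Moreover, if $K\neq L$ in $\K$ then $e_K(K)\neq e_L(L)$: otherwise $K\cong e_K(K)=e_L(L)\cong L$ (both inheriting the same weak$^*$-topology from $Z^*$), contradicting (ii). Thus $K\mapsto e_K(K)$ is an injection of $\K$ into the family of separable weak$^*$-compact subsets of $Z^*$.

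It then remains to count such subsets. Each is the weak$^*$-closure of any of its countable weak$^*$-dense subsets; since $|Z^*|=\con$, the collection of countable subsets of $Z^*$ has cardinality $\con^\omega=\con$, and passing to closures can only decrease this. Hence there are at most $\con$ separable weak$^*$-compact subsets of $Z^*$, giving $|\K|\le\con$ as desired. The points demanding most care are the injectivity and weak$^*$-continuity of $e_K$ — which rest on $T_K$ being a genuine Banach space isomorphism, so that $T_K^*$ is a weak$^*$-continuous injection — together with the cardinality bookkeeping that converts separability into a count over countable subsets of $Z^*$; the rest of the argument is a straightforward reduction.
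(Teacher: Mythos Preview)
Your proof is correct. The route differs from the paper's in an interesting way: the paper uses separability to produce, for each $K\in\K$, a continuous surjection $\pi_K:\beta\omega\to K$ and hence an isometric embedding $\pi_K^\ast:C(K)\hookrightarrow C(\beta\omega)$; composing with the isomorphisms $T_K:C(K_0)\to C(K)$ yields distinct operators $S_K:C(K_0)\to C(\beta\omega)$, and one counts such operators by noting that each is determined by the sequence $\langle S_K^\ast(\delta_n):n\in\omega\rangle\in M(K_0)^\omega$. You bypass $\beta\omega$ entirely and work on the dual side: $e_K=T_K^\ast\circ\delta$ realizes each $K$ as a separable weak$^\ast$-compact subset of $M(K_0)$, and you count those via countable dense subsets. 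The two arguments are close cousins---unwinding the paper's $S_K^\ast(\delta_n)=T_K^\ast(\delta_{\pi_K(n)})$ shows that it, too, is recording $e_K$ on a dense set---but yours is a bit more streamlined in that it avoids the detour through $C(\beta\omega)$ and the verification that the images $\pi_K^\ast[C(K)]$ are pairwise distinct. The paper's formulation, on the other hand, makes the final cardinality estimate $|\cL(C(K_0),C(\beta\omega))|\le\con$ completely transparent as a bound on a space of operators.
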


\begin{proof}
Since every $K\in\K$ is separable, there is a continuous surjection $\pi_K:\beta\omega\to K$. Consequently,
$C(K)$ can be isometrically embedded into $C(\beta\omega)$ via the mapping $\pi_K^\ast(g)=g\circ \pi_K$, $g\in C(K)$.

Take $K,L\in\K$, $K\neq L$. Note that if the condition
\[ \pi_K(p_1)=\pi_K(p_2)\iff \pi_L(p_1)=\pi_L(p_2),\]
was satisfied for any $p_1,p_2\in\beta\omega$ then the formula $h(\pi_K(p))=\pi_L(p)$ would properly define
a homeomorphism $h:K\to L$. Hence, for instance,  there are $p_1,p_2\in\beta\omega$ such that
$\pi_K(p_1)=\pi_K(p_2)$ while $\pi_L(p_1)\neq \pi_L(p_2)$. This immediately implies that
$\pi_K^\ast[C(K)]\neq \pi_L^\ast[C(L)]$ --- consider a function $g\in C(L)$ that distinguishes $\pi_L(p_1)$ and $\pi_L(p_2)$.

Fix now $K_0\in\K$; for every $K\in\K$ there is an isomorphism $T_K:C(K_0)\to C(K)$ and therefore
$S_K:C(K_0)\to C(\beta\omega)$, where $S_K=\pi_K^\ast\circ T_K$ is a bounded linear operator. It follows that
$S_K\neq S_{K'}$ whenever $K\neq K'$. To conclude the proof, it is therefore  sufficient to show that the space
$\cL(C(K_0), C(\beta\omega))$ of all bounded operators, is of size at most $ \con$.

Note that any operator $R: C(K_0)\to C(\beta\omega)$ is uniquely determined by the sequence
\[\la R^\ast(\delta_n): n\in\omega\ra,\]
of measures from $M(K_0)$,
where
$R^\ast: M(\beta\omega)\to M(K_0)$ is the adjoint operator. By our assumption, $|M(K_0)|=\con$ so $|M(K_0)|^\omega=\con$,
and we are done.
\end{proof}

Let us remark that, in the setting of the above theorem, if we know only that $|M(K_0)|=\con$ for some $K_0\in\K$, then automatically
$|M(K)|=\con$ for every $K\in\K$ (since $C(K)\simeq C(K_0)$). However, separability of the domain is not preserved by isomorphisms between the spaces
of continuous functions. For instance, $\ell_\infty\simeq L_\infty[0,1]$ by Pe{\l}czy\'nski theorem, so $C(\beta\omega)\simeq C(K)$, where
$K$ is the Stone space of the measure algebra (recall that such $K$ is not separable).

\begin{corollary}\label{counting:2}
There are $2^\con$ pairwise nonisomorphic twisted sums of $c_0$ and $c_0(\con)$.
\end{corollary}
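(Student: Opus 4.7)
The plan is to apply Theorem \ref{counting:1} to the family of spaces $K_{\c A}$ with $\c A$ an almost disjoint family on $\omega$ of cardinality $\con$, and then to invoke Theorem \ref{tcc:1}, which guarantees that each such $C(K_{\c A})$ is already a twisted sum of $c_0$ and $c_0(\con)$. The matching upper bound $2^\con$ will be immediate, since every such twisted sum has density at most $\con$, and there are at most $2^\con$ isomorphism types of Banach spaces of a given density.

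First I would verify the hypotheses of Theorem \ref{counting:1}. Every $K_{\c A}$ is separable, because $\omega$ is a countable dense subset; moreover $K_{\c A}$ is a scattered compactum of cardinality $\con$, so each regular Borel measure on it is atomic and concentrated on countably many points, giving $|M(K_{\c A})|\le \con^\omega=\con$ (the reverse inequality is trivial from the Dirac masses).

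The core step is exhibiting $2^\con$ pairwise non-homeomorphic spaces $K_{\c A}$. I would fix a maximal almost disjoint family $\c D$ on $\omega$ of cardinality $\con$ and consider its $2^\con$ subfamilies $\c A\subseteq\c D$ of size $\con$. Two observations are then needed. First, distinct subfamilies $\c A\neq\c A'$ of $\c D$ remain distinct modulo finite modifications: if $A\neq B$ lie in $\c D$, almost disjointness gives $|A\cap B|<\omega$, and if in addition $|A\triangle B|<\omega$ then $A=(A\cap B)\cup(A\setminus B)$ would be finite, contradicting $\c D\subseteq[\omega]^\omega$. Second, any homeomorphism $K_{\c A}\to K_{\c B}$ must send isolated points to isolated points, hence restricts to a permutation $\sigma$ of $\omega$ with $\sigma[\c A]=\c B$ modulo finite modifications. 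Since there are only $\con$ permutations of $\omega$, each homeomorphism class among our $K_{\c A}$ contains at most $\con$ of the subfamilies of $\c D$, and therefore the number of homeomorphism types is at least $2^\con/\con=2^\con$.

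Let $\c K$ be a set of representatives of those homeomorphism classes. By Theorem \ref{counting:1}, each isomorphism class in $\{C(K):K\in\c K\}$ contains at most $\con$ members of $\c K$, so there are at least $2^\con$ pairwise nonisomorphic spaces $C(K_{\c A})$ on the list; by Theorem \ref{tcc:1}, each of them is a twisted sum of $c_0$ and $c_0(\con)$, yielding the lower bound. The step I expect to be the most delicate is the homeomorphism count: one must carefully check both that distinct subfamilies of a single maximal almost disjoint family are pairwise inequivalent modulo finite, and that every homeomorphism between two Aleksandrov--Urysohn compacta is governed by the underlying permutation of $\omega$ obtained by restriction to the isolated points.
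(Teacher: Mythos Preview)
Your proof is correct and follows essentially the same route as the paper: verify separability and $|M(K_{\cA})|=\con$ via scatteredness, note that any homeomorphism $K_{\cA}\to K_{\cB}$ is determined by the induced permutation of $\omega$, count, and then invoke Theorem~\ref{tcc:1}. The only difference is that you spell out how to obtain $2^{\con}$ pairwise non-homeomorphic compacta (via subfamilies of a fixed family $\c D$) where the paper simply asserts their existence; note that you never use maximality of $\c D$---any almost disjoint family of size $\con$ works---and the upper-bound remark, while correct, is not needed for the corollary as stated.
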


\begin{proof}
In what follows, $\cA, \cB$ (with possible indices) denote almost disjoint families of subsets of $\omega$ of cardinality $\con$.
For every almost disjoint family $\cA$, $K_\cA$ is a separable compact space and, since $K_\cA$ is scattered, $|M(K_\cA)|=\con$.

Note first that any homeomorphism $h: K_\cA\to K_\cB$ is  determined by a permutation of $\omega$. Therefore, we can define
a sequence $\la \cA_\xi: \xi<2^\con\ra$ such that the spaces $K_{\cA_\xi}$ are pairwise not homeomorphic.
By a direct application of Theorem \ref{counting:1}, we conclude that there is $I\sub2^\con$ of cardinality $2^\con$ such that
$C(K_{A_\xi})$ is not isomorphic to $C(K_{A_\eta})$ whenever $\xi,\eta\in I$ and $\xi\neq \eta$.
Now, the assertion follows from Theorem \ref{tcc:1}.
\end{proof}

The argument that proves Theorem \ref{counting:1} and Corollary \ref{counting:2} can be generalized to some higher cardinals, see Section  \ref{appendix}.

Recall that, given  a scattered compact space $K$,  we have the equality $w(K) = |K|$ (see \cite{So03}).
Since, for infinite compacta $K$, weights of $K$ and $C(K)$ are equal, we obtain the following simple observation.

\begin{prop}
Let $\cA$ and $\cB$ be infinite almost disjoint families of subsets of $\omega$. If $C(K_\cA)$ and $C(K_\cB)$ are isomorphic then $|\cA| = |\cB|$.
\end{prop}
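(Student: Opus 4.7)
The plan is to chain together three standard facts: the cardinality of $K_\cA$ is governed by $|\cA|$, the weight of a scattered compactum equals its cardinality, and the weight of an infinite compactum $K$ coincides with the density character (hence an isomorphism invariant) of $C(K)$.

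First I would observe that for an infinite almost disjoint family $\cA$, the underlying set of $K_\cA = \omega\cup\cA\cup\{\infty\}$ has cardinality $|\omega|+|\cA|+1=|\cA|$. Next, since $K_\cA$ is scattered (it has height $3$, as noted in Section \ref{prel}), the fact cited from \cite{So03} gives $w(K_\cA)=|K_\cA|=|\cA|$, and analogously $w(K_\cB)=|\cB|$.

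Now I would invoke the other general fact recalled just before the statement: for any infinite compact space $K$ the weight $w(K)$ equals the density character of the Banach space $C(K)$ (one direction: a dense subset of $C(K)$ separates points and generates a subalgebra that, by Stone--Weierstrass, determines a base of size at most its own cardinality; the other: evaluating against a base yields a separating family of the same size). Since the density character is clearly invariant under Banach-space isomorphism, $C(K_\cA)\simeq C(K_\cB)$ forces $w(K_\cA)=w(K_\cB)$, and combining with the previous paragraph gives $|\cA|=|\cB|$.

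There is essentially no obstacle here; the whole argument is just a bookkeeping exercise assembling results the authors already quote. The only mild point to mention is that one should justify using $w(K)=\operatorname{dens}(C(K))$ (or simply $w(K)=w(C(K))$), but this is classical and is precisely what the sentence preceding the proposition invokes. Accordingly I would keep the proof to two or three lines.
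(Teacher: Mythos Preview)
Your proposal is correct and follows essentially the same route as the paper: both arguments combine $w(K_\cA)=|K_\cA|=|\cA|$ for scattered compacta with the classical identity $w(K)=w(C(K))$ (equivalently, the density character of $C(K)$) for infinite compacta, and then use that this quantity is an isomorphism invariant. The only cosmetic difference is that you phrase the invariant as the density character of $C(K)$ whereas the paper speaks of $w(C(K))$, but for Banach spaces these coincide.
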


Using the construction from the proof of Theorem 8.7 from \cite{maple}, we shall now define a large family of compactifications of $\omega$ with remainders homeomorphic to $\bS$. We follow here the notation introduced in
subsection \ref{das}; recall, in particular,  that for ane $x\in (0,1)$ we write $P_x$ for $\{q\in Q: q\le x\}$.

For every $x\in (0,1)$, fix  a nondecreasing  sequence $(q_x^n)_{n\in\omega}$ in $Q$ such that $\lim_{n}q_x^n = x,\ q_x^n<x$, and
put  $S_x=\{q_x^n: n\in\omega\}$. Take any function $\theta : (0,1) \to 2$ and define
\begin{equation}
\label{ltheta}  R_x^\theta =\begin{cases} P_x& \text{if $\theta(x)=0$},\\ P_x\setminus S_x & \text{if $\theta(x)=1$}.
\end{cases}
\end{equation}

Then $R_x^\theta\sub^\ast R_y^\theta$ whenever $x<y$ since, in such a case,  $P_x\cap S_y$ is finite.

Let $\mathfrak{B}^\theta$ be a subalgebra of ${\mathcal P}(Q)$ generated by $\{R_x^\theta: x\in (0,1)\}\cup\fin$,
where $\fin$ denotes the family of all finite subsets of $Q$; we let  the space $L^\theta$ be $\ult(\fB^\theta)$, the Stone space
of the underlying Boolean algebra.

Note that every space $L^\theta$ may be seen as a compactification of the discrete set $Q$ with the remainder homeomorphic to $\bS$.
Indeed, we may think that $Q\sub L^\theta$ by identifying $q\in Q$ with $\cF\in\ult(\fB^\theta)$ containing $\{q\}$. If $\cF\in \ult(\fB^\theta)$ contains no finite
subset of $Q$ then it is uniquely determined by the set $\{x\in (0,1); R_x\in\cF\}$. Now the point is that
the family $\{ R_x:x\in (0,1)\}$ forms a chain with respect to  almost inclusion. Hence the Boolean algebra $\fB/\fin$ is isomorphic to $\fA$ which means that
$L^\theta\setminus Q$ is homeomorphic to $\ult (\fA) \simeq \bS$ (see also \cite[Theorem 8.7]{maple}).
It follows that every space $C(L^\theta)$ is a twisted sum of $c_0$ and $C(\bS)$..

\begin{corollary}\label{counting:3}
There are $2^\con$ pairwise nonisomorphic twisted sums of $c_0$ and $C(\bS)$.
\end{corollary}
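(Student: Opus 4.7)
The strategy mirrors the proof of Corollary \ref{counting:2}, with the $2^\con$-parameter family $\{L^\theta:\theta\in 2^{(0,1)}\}$ playing the role of $\{K_\cA\}$. Since the preceding paragraph already shows that every $C(L^\theta)$ is a twisted sum of $c_0$ and $C(\bS)$, the task reduces to extracting $2^\con$ pairwise non-isomorphic spaces from this family, which will be accomplished by applying Theorem \ref{counting:1}.

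First I would check the hypotheses of Theorem \ref{counting:1}. Separability of each $L^\theta$ is immediate, as $Q$ is a countable dense subset. For $|M(L^\theta)|=\con$, I would decompose any Radon measure $\mu$ on $L^\theta$ as $\mu|_Q+\mu|_{\bS}$: the first summand, concentrated on the countable discrete set $Q$, is encoded by a sequence of reals. The second is an element of $M(\bS)$; splitting it into its atomic and continuous parts, the atomic part is again determined by countable data, while the continuous part, being atomless, is fully recovered from its pushforward under the natural projection $\bS\to[0,1]$ (the at most two-element fibers carry no mass). Hence $|M(\bS)|\le\con$ and consequently $|M(L^\theta)|=\con$.

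Second, I would extract $2^\con$ pairwise non-homeomorphic spaces $L^\theta$. The key observation is that the isolated points of every $L^\theta$ are exactly $Q$, so any homeomorphism $h:L^{\theta_1}\to L^{\theta_2}$ restricts to a permutation of $Q$ and is in turn determined by this permutation, by density of $Q$ and Hausdorffness. As there are only $\con$ permutations of $Q$, each homeomorphism class among the $L^\theta$ contains at most $\con$ values of $\theta$. Since $2^\con\cdot\con=2^\con$, a straightforward transfinite selection yields a subfamily $\{\theta_\xi:\xi<2^\con\}$ whose spaces $L^{\theta_\xi}$ are pairwise non-homeomorphic.

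Finally, applying Theorem \ref{counting:1} inside each isomorphism class of the family $\{C(L^{\theta_\xi}):\xi<2^\con\}$ bounds its cardinality by $\con$; since the total family has size $2^\con$, at least $2^\con$ isomorphism classes occur, delivering the desired $2^\con$ pairwise non-isomorphic twisted sums of $c_0$ and $C(\bS)$. The main technical obstacle I expect is the bound $|M(\bS)|\le\con$: it rests on the description of Radon measures on $\bS$ via the projection to $[0,1]$, where one has to carefully separate the atomic and continuous parts. Once this is in place, the rest is a direct transcription of the scheme used for Corollary \ref{counting:2}.
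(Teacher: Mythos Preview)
Your proof is correct and follows essentially the same route as the paper's. The only difference is in justifying $|M(\bS)|\le\con$: the paper simply cites a result of Mercourakis that every compact separable linearly ordered space carries at most $\con$ regular Borel measures, whereas you give a direct argument via the projection $\bS\to[0,1]$ (which indeed works, since the clopen intervals of $\bS$ differ from $\pi^{-1}$-preimages of closed intervals by at most two points).
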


\begin{proof}
We can follow the idea of the proof of Corollary \ref{counting:2}. We have a family $\{L^\theta: \theta\in 2^{(0,1)}\}$ of separable compact spaces.
For any $\theta$, $|M(L^\theta)|=\con$ because $L^\theta$ may be identified with $\bS\cup\omega$ and $M(\bS)$ has cardinality $\con$.
The latter follows from the fact that a compact separable linearly ordered space carries at most $\con$ regular measures, see e.g.\
Mercourakis \cite{Me96}.

Again, for  a fixed space $L^\theta$,  the set $\{\eta\in 2^{(0,1)}: L^\theta\simeq L^\eta\}$ is of size at most $\con$ (since any homeomorphism of such spaces
fixes $Q$ and, as before, we can single out  $2^\con$ many pairwise nonisomorphic spaces of the form $C(L^\theta)$.
\end{proof}

\section{Separable Rosenthal compacta}

The purpose of this section is to collect several subtle results concerning applications of descriptive set theory to separable Rosenthal compacta  that will be needed in the next section.

A compact space $K$ is {\em Rosenthal compact} if embeds into $B_1(\omega^\omega)$, the space of Baire-one functions on the Polish space $\omega^\omega$ (homeomorphic to the irrationals), equipped with the topology of pointwise convergence; recall that a Baire-one function is a pointwise limit of a sequence of continuous functions.
We refer to \cite{De14} and \cite{Mar03} for basic properties
of Rosenthal compacta and further references.

Recall that in a Polish space $T$, the Borel $\sigma$-algebra $Bor(T)$ can be written as
\[ Bor(T)=\bigcup_{1\le \alpha<\omega_1}\Sigma^0_\alpha(T)=
 \bigcup_{1\le \alpha<\omega_1}\Pi^0_\alpha(T),\]
 where $\Sigma^0_1(T)$ and $\Pi^0_1$ are the families of open and closed sets, respectively. Additive classes $\Sigma^0_\alpha$ are defined inductively as
 all countable unions of elements from $\bigcup_{\beta<\alpha} \Pi^0_\beta$ and so on, see
Kechris \cite[11.B]{Ke95} for details. Also recall that a subset of a Polish space is {\em analytic} if it is a continuous image of $\omega^\omega$. In a Polish space, every Borel set is analytic.

\par If $K$ is any separable compact space, $D\sub K$ is any countable dense subset and $X$ is a topological space, then we denote by $C_D(K,X)$ the space of all continuous functions from $K$ into $X$, equipped with the topology of pointwise convergence on $D$. We write $C_D(K)$ for $C_D(K,\er)$. In other words, $C_D(K)$ is a subset of $\er^D$ which is  the image of the restriction map
$C(K)\ni g\to g_{|D}\in \er^D$.

Recall that the dual Banach space $C(K)^\ast$ may be identified with the space $M(K)$,  of regular Borel measures on $K$ of finite variation.
We extend the notation introduced above as follows. Consider any countable subset $M\sub M(K)$ separating the functions in $C(K)$; then
$C_M(K)$ stands for the space $C(K)$ endowed with
the weak topology generated by $M$. Here, we simply  identify $f\in C_M(K)$ with $(\mu(f))_{\mu\in M}\in \er^M$.

We briefly mention some properties of Rosenthal compacta. Godefroy showed in \cite{Go80} that if $K$ is Rosenthal compact, then $M_1(K)$, the dual unit ball, is Rosenthal compact in its $weak^\ast$ topology.  On the other hand, Bourgain, Fremlin and Talagrand \cite{BFT78} proved that every Rosenthal compactum $K$ is {\em Fr\'echet-Urysohn}, i.e., for any $A\subseteq K$ and
$x\in \overline{A}$, there is a sequence $(x_n)_{n\in\omega}$ of points from $A$ which converges to $x$. Those properties, in particular, imply
that if $D\sub K$ is a countable dense set then every $\mu\in M(K)$ is a Baire-one function on $C_D(K)$; see the proof of Theorem 3.1 in \cite{Ma88}.

\par The following result can be used as a characterization of separable Rosenthal compacta; part $(a)$ of Theorem \ref{src:1} is due to Godefroy \cite{Go80}, while part $(b)$ is Corollary 2.4 in Dobrowolski and Marciszewski \cite{DM95}.

\begin{theorem}\label{src:1}
\begin{enumerate}[(a)]
\item A separable compact space $K$ is Rosenthal compact if and only if $C_D(K)$ is analytic for every countable dense set $D\sub K$.
\item If $K$ is a compact space and $D\sub K$ is a countable dense set such that $C_D(K)$ is analytic then
either $K$ is Rosenthal compact or $K$ contains a copy of $\beta\omega$.
\end{enumerate}
\end{theorem}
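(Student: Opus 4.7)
The plan is to tackle (a) and (b) with different tools, and to view (a) backward as a corollary of (b) plus a separate obstruction argument.

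For the forward direction of (a), I would start from the embedding $K\sub B_1(\omega^\omega)$ coming from Rosenthal compactness. For each $\xi\in\omega^\omega$ the evaluation $e_\xi\colon K\ni f\mapsto f(\xi)$ is continuous on $K$ (since $K$ carries the pointwise topology), and the $\er$-algebra generated by $\{e_\xi:\xi\in\omega^\omega\}\cup\{1\}$ separates points of $K$, hence is uniformly dense in $C(K)$ by Stone--Weierstrass. This already shows that the "polynomial image" in $\er^D$ is an analytic subset of $\er^D$, because the map sending a finite word in the $e_\xi$'s and rationals to its restriction to $D$ is a continuous map from a Polish parameter space. To upgrade from the polynomial image to all of $C_D(K)$ I would package the argument through Godefroy's observation that $(M_1(K),w^*)$ is itself a Rosenthal compactum, and then realize $C_D(K)$ as the continuous image under evaluation of a Polish parametrization of this dual ball; continuity of the restriction map keeps analyticity.

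For part (b), which is the substantive half, I would use the Bourgain--Fremlin--Talagrand dichotomy together with Rosenthal's $\ell_1$-theorem. Assuming $C_D(K)$ is analytic but $K$ is not Rosenthal, one can find a sequence in $K$ (or in an appropriate set of evaluation functionals) that admits no pointwise convergent subsequence. Rosenthal's theorem then yields a subsequence equivalent to the unit vector basis of $\ell_1$. The point is to convert this analytic-functional fact into a \emph{topological} embedding $\beta\omega\hookrightarrow K$: I would look at the countable collection of these witnesses inside $D$ and use the analytic description of $C_D(K)$, together with a Baire-category / tree-argument on $\omega^\omega$, to build for every $A\sub\omega$ a continuous $\{0,1\}$-valued function on $K$ separating the two halves of the $\ell_1$-sequence indexed by $A$ and its complement. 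Such a family of characteristic functions exhibits the Stone--\v Cech extension of the countable discrete set.

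The backward direction of (a) then follows from (b) combined with the following observation: if $K$ is separable and contains a copy of $\beta\omega$, then one can produce a countable dense $D\sub K$ such that $C_D(K)$ is \emph{not} analytic. Take a natural countable dense $D_0$, pick any free ultrafilter point $p\in\beta\omega\sub K$, and set $D=D_0\cup\{p\}$; the requirement $\phi\in C_D(K)$ forces $\phi(p)$ to equal an ultrafilter-limit of values $\phi(d_n)$ along some sequence $d_n\to p$, and the graph of such an ultrafilter limit on $\ell_\infty$ is notoriously not analytic (it is not even Baire-measurable). The principal obstacle throughout is the interaction between "soft" compactness arguments ($\ell_1$-spreading, ultrafilter limits, Fr\'echet--Urysohn extraction of sequences) and the "hard" descriptive-set-theoretic bookkeeping needed to certify analyticity or its failure; the care lies in replacing arbitrary choices with Borel ones wherever possible, so that the constructed objects live inside the analytic frame provided by the hypothesis.
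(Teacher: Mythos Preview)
The paper does not give a proof of this theorem at all: it is stated as background and attributed to the literature --- part (a) to Godefroy \cite{Go80} and part (b) to Dobrowolski and Marciszewski \cite{DM95} (their Corollary~2.4). So there is no ``paper's own proof'' to compare your sketch against; any correct argument you supply would go beyond what the paper does.

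That said, a couple of points in your sketch deserve attention. In your derivation of the backward implication of (a) from (b), you write that ``$\phi(p)$ must equal an ultrafilter-limit of values $\phi(d_n)$ along some sequence $d_n\to p$''. This is precisely what \emph{cannot} happen: a free ultrafilter point of $\beta\omega$ is not the limit of any sequence from $\omega$. The right formulation is to enlarge $D$ by adjoining the whole copy of $\omega$ (not just the single point $p$); since $\beta\omega$ is closed in $K$, one checks that $\overline{A}^{K}\cap\overline{\omega\setminus A}^{K}=\emptyset$ for every $A\sub\omega$, so by Tietze each $\chi_A$ extends to some $f_A\in C(K)$, and necessarily $f_A(p)=[A\in\mathcal U]$. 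Intersecting the continuous image of $C_D(K)$ in $\er^{\omega}\times\er$ with $2^{\omega}\times\{1\}$ then yields exactly the ultrafilter $\mathcal U\sub 2^{\omega}$, which is not analytic. So your idea is salvageable, but as written it mis-states the mechanism.

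Your outline for (b) is the vaguest part. Rosenthal's $\ell_1$-theorem gives an $\ell_1$-sequence of functionals, but passing from that to a topological copy of $\beta\omega$ inside $K$ is a genuine step; the phrase ``Baire-category / tree-argument on $\omega^\omega$'' is not yet an argument. The actual proof in \cite{DM95} proceeds differently, via an analysis of independent families and the structure of analytic subspaces of $\er^D$. If you want to pursue your route you will need to explain concretely how analyticity of $C_D(K)$ lets you manufacture, for every $A\sub\omega$, a clopen set in $K$ with the prescribed trace on the $\ell_1$-sequence.
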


\par Godefroy's characterization of separable Rosenthal compacta
was followed by introducing in \cite{Ma88} a certain index measuring the complexity of
 such spaces.
This kind of Rosenthal's index will be denoted here by $\mi(\cdot)$; note that in \cite{Ma88} the working notation $\eta(\cdot )$ was used.

\begin{defin}\label{src:2} We define the index $\mi$ on the class of separable Rosenthal compacta as follows.
Set $\mi(K)=\omega_1$ if $C_D(K)$ is Borel in $\er^D$ for no countable dense set $D\sub K$. Otherwise, set $\mi(K)=\alpha$, where
$\alpha$ is the least ordinal number $<\omega_1$ such that
\[ C_D(K) \in \Sigma^0_{1+\alpha}(\er^D) \cup \Pi^0_{1+\alpha}(\er^D),\]
for some countable dense $D\sub K$.
\end{defin}

The reader should be warned that the difference between Definition \ref{src:2} and that from \cite{Ma88} is connected with the fact that older tradition was to
count Borel classes starting from 0 rather than 1 (for instance, $F_{\sigma\delta}$ is $\Pi^0_3$).
Recall that $\mi(K)\ge 2$ whenever $K$ is infinite, see \cite[Theorem 2.1]{Ma88}. The double arrow space $\bS$ is a classical nonmetrizable Rosenthal compactum
with $\mi(\bS)=2$.

The main feature of the index $\mi$ is that it is almost preserved by isomorphisms of Banach spaces, as we show next.
The following result is a particular case of \cite[Corollary 3.2]{Ma88};
We outline the main idea of its proof because the argument  is a much shorter in our setting.

\begin{theorem}\label{src:3}
If $K$ and $K'$ are separable Rosenthal compacta and $C(K)\simeq C(K')$ then
\[ \mi(K)\le 1+\mi(K') \mbox{ (and, by symmetry,  } \mi(K')\le 1+\mi(K)).\]
In particular, $\mi(K)=\mi(K')$ whenever  $\mi(K)\ge\omega$.
\end{theorem}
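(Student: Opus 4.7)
The plan is to transport descriptive complexity from $K'$ to $K$ through the Banach-space isomorphism and its adjoint, converting the question into a comparison of two natural Polish embeddings of $C(K')$. Fix an isomorphism $T: C(K')\to C(K)$, set $\alpha=\mi(K')$, and pick a countable dense $D'\sub K'$ that witnesses the index, so that $C_{D'}(K')\in \Sigma^{0}_{1+\alpha}(\er^{D'})\cup \Pi^{0}_{1+\alpha}(\er^{D'})$. For an arbitrary countable dense $D\sub K$, define $\mu_d=T^{\ast}\delta_d\in M(K')$ and put $M=\{\mu_d:d\in D\}$. Since $T$ is an isomorphism and the $\delta_d$'s separate $C(K)$, the family $M$ separates the points of $C(K')$, and the identity $(Tg)(d)=\mu_d(g)$ for $g\in C(K')$ shows that, under the natural bijection $\er^{D}\equiv \er^{M}$ given by $d\leftrightarrow \mu_d$, the sets $C_D(K)$ and $C_M(K')$ coincide \emph{as subsets} of the ambient Polish space. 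Hence the problem reduces to bounding the Borel complexity of $C_M(K')$ inside $\er^{M}$ by $\Sigma^{0}_{2+\alpha}\cup \Pi^{0}_{2+\alpha}$; this would give $\mi(K)\le 1+\alpha = 1+\mi(K')$.

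Next I analyze the canonical bijection
\[ \Phi: C_{D'}(K')\to C_M(K'), \qquad \Phi(f|_{D'})=(\mu(f))_{\mu\in M}, \]
which identifies both sides with $C(K')$. The coordinate functions of $\Phi$ are precisely the maps $f|_{D'}\mapsto \mu(f)$, and by the Bourgain--Fremlin--Talagrand theorem (recalled just before Theorem \ref{src:1}) every $\mu\in M(K')$ is a Baire-one function on $C_{D'}(K')$. Therefore $\Phi$ is a Baire-class-$1$ injection from a Borel subset of $\er^{D'}$ into the Polish space $\er^{M}$.

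To close the argument I invoke a Lusin--Souslin-type transfer result: the image of a Borel set of additive or multiplicative class $\beta$ under an injective Baire-class-$1$ map between Polish spaces is Borel, and in fact lies in $\Sigma^{0}_{\beta+1}\cup \Pi^{0}_{\beta+1}$. Applied with $\beta=1+\alpha$ this yields $C_M(K')\in \Sigma^{0}_{2+\alpha}(\er^{M})\cup \Pi^{0}_{2+\alpha}(\er^{M})$, so by the reduction in Step~1 we obtain $\mi(K)\le 1+\mi(K')$. The symmetric bound is automatic, and the "in particular" clause is a piece of ordinal arithmetic: $1+\alpha=\alpha$ whenever $\alpha\ge\omega$.

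The main obstacle is the final step: producing the sharp \emph{one-level} jump in the Borel hierarchy under the injective Baire-class-$1$ map $\Phi$, rather than settling for "Borel of unspecified class" via the plain Lusin--Souslin theorem. A secondary technical point is that the older convention of \cite{Ma88} indexes Borel classes from $0$ while Definition \ref{src:2} starts them at $1$; care is needed so that the inequality $\mi(K)\le 1+\mi(K')$ comes out with the correct additive shift.
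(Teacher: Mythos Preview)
Your strategy---transporting the problem through $T^\ast$ and reducing to a comparison of two Polish embeddings of the same function space---is exactly the right idea, and it matches the paper's approach up to the point where you define $\Phi$. The genuine gap is the ``Lusin--Souslin-type transfer result'' you invoke in the final step: no such theorem exists. There is \emph{no} general bound of the form ``an injective Baire-class-$1$ map sends $\Sigma^0_\beta\cup\Pi^0_\beta$ sets to $\Sigma^0_{\beta+1}\cup\Pi^0_{\beta+1}$ sets''. Indeed, already for continuous injections this fails: every Borel subset of a Polish space is the continuous injective image of a closed subset of $\omega^\omega$, so a $\Pi^0_1$ set can be sent to a Borel set of arbitrarily high rank. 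Equivalently, your claim would force the inverse of any Baire-class-$1$ bijection between Polish spaces to be $\Sigma^0_3$-measurable, and it is classical that Baire-class-$1$ bijections can have inverses of arbitrarily high Baire class. You yourself flag this as ``the main obstacle'', but the proposal does not overcome it.

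The missing ingredient is control of $\Phi^{-1}$. The paper obtains this by a symmetric back-and-forth construction: rather than taking $M=T^\ast[\Delta_D]$ only, one builds countable sets $M\supseteq\Delta_D$ in $M_1(K)$ and $M'\supseteq\Delta_{D'}$ in $M_1(K')$ closed under both $T^\ast$ and $(T^{-1})^\ast$, so that $T$ becomes a \emph{homeomorphism} $C_M(K)\to C_{M'}(K')$. Since the identity $C_M(K)\to C_D(K)$ is continuous with Baire-one inverse (each $\mu\in M$ is a sequential $weak^\ast$ limit of convex combinations of $\Delta_D$), and symmetrically on the $K'$ side, the composite $\vf:C_D(K)\to C_{D'}(K')$ is a bijection with \emph{both} $\vf$ and $\vf^{-1}$ of Baire class $1$. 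One then applies Kuratowski's theorem to extend $\vf$ to a Baire-one isomorphism between $\Pi^0_3$ supersets, and uses the \emph{preimage} rule for Baire-class-$1$ maps (which \emph{does} give the sharp $+1$ shift) together with $\mi(K),\mi(K')\ge 2$ to conclude. In short: replace ``image under an injective Baire-one map'' by ``preimage under a Baire-one map'', and to make that available you must first arrange that the inverse is Baire-one as well.
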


\begin{proof} Let $T:C(K)\to C(K')$ be an isomorphism such that $c\cdot \|g\|\le \|Tg\|\le \| g\|$ for every $g\in C(K)$, where $c>0$.
 Fix countable dense sets $D\sub K$ and $D'\sub K'$ realizing the values of $\mi(K)$ and $\mi(K')$, respectively, and write $\Delta_D=\{\delta_d: d\in D\}$,
$\Delta_{D'}=\{\delta_d: d\in D'\}$. \medskip

\par \textbf{Claim 1.} \emph{ There are countable sets $M, M'$, where $\Delta_D\sub M\sub M_1(K)$, $\Delta_{D'}\sub M'\sub M_1(K')$ such that
\[ C_M(K)\ni g\to Tg\in C_{M'}(K'),\]
is a homeomorphism.}

\begin{proof}[Proof of the claim:] Note that $T^\ast$ sends $M_1(K')$ into $M_1(K)$. Likewise, if we consider $S:C(K')\to C(K)$ given by $S=c\cdot T^{-1}$ then $S^\ast$ sends $M_1(K)$ into
$M_1(K')$. To define $M$ and $M'$ put $M(0)=\Delta_D$ and $M'(0)=\Delta_{D'}$ and define inductively
\[ M_{n+1}=T^\ast[M'(n)],\quad M'(n+1)=S^\ast[M(n)].\]
Then the sets $M=\bigcup_n M(n)$ and $M'=\bigcup_n M'(n)$ are as required. For instance, suppose that a sequence $(g_k)_k$ converges in $C_M(K)$ and
consider any $\nu\in M'$. Then $\nu\in M'(n)$ for some $n$ and therefore $\mu=T^\ast\nu\in M_{n+1}\sub M$.
Hence $\nu(Tg_k)=T^\ast \nu (g_k)\to T^\ast\nu (g)=\nu(Tg)$. \end{proof}

Now we can examine the mapping $\vf: C_D(K)\to C_{D'}(K')$ closing the following diagram

$$\xymatrix{
C_M(K)\ar[rr]^T\ar[d]^{id}&&C_{M'}(K')\ar[d]^{id}\\
C_{D}(K)\ar@{.>}[rr]^\vf   &&C_{D'}(K')
}$$

\par \textbf{Claim 2.} \emph{$\vf$ and $\vf^{-1}$ are mapping of the first Baire class.}
\begin{proof}[Proof of the claim:]  Indeed, while the mapping $id: C_M(K)\to C_D(K)$ is continuous (as $\Delta_D\sub M)$, its inverse is of the first Baire class since every $\mu\in M_1(K)$ is a $weak^\ast$ limit of a sequence from the absolute convex hull of $\Delta_D$. This shows that $\vf$ is of the first Baire class; the argument for $\vf^{-1}$ is symmetric. \end{proof}

The final step is to use Kuratowski's theorem \cite[par.\ 35 VII]{Ku64} which assures that there are $F_{\sigma\delta}$ sets $A,B$ containing $C_D(K)$ and $C_{D'}(K')$ respectively, and an extension $\wt{\vf}$ of $\vf$ to a Baire-one isomorphism $A\to B$. This, together with $\mi(K),\mi(K')\ge 2$ gives $\mi(K)\le 1+\mi(K')$
and $\mi(K')\le 1+\mi(K)$.
\end{proof}

\section{Twisted sums and Rosenthal compacta }\label{tsr}
Denote by $2^{<\omega}$ the full dyadic tree and, as usual, $2^\omega$ is the Cantor set. For any $x\in 2^\omega$, set
 $B(x)=\{x|{n}: n\in \omega\} \subset 2^{<\omega}$. It is clear that for any $Z\sub 2^\omega$,
 \begin{equation}
 \label{caz} \cA_Z=\{B(x):x\in Z\},
 \end{equation}
 is an almost disjoint family of infinite subsets of $2^{<\omega}$.
\par The mapping $x\mapsto B(x)$ is a homeomorphic embedding of $2^\omega$ into $2^{2^{<\omega}}$ (we identify $\mathcal{P}(2^{<\omega})$ with $2^{2^{<\omega}}$). Therefore, for any $Z\sub 2^\omega$, $\cA_Z$ is an almost disjoint family (in $2^{2^{<\omega}}$) which is homeomorphic to $Z$. Moreover, for Borel $Z$, we have the following (see \cite[4.2]{Ma88}).

 \begin{theorem}\label{tsr:1}
 If $Z\in \Sigma^0_{1+\alpha}(2^\omega)$, where  $\alpha \ge 2$, and $\cA_Z$ is an almost disjoint family given by (\ref{caz}),
 then $K_{\cA_Z}$ is Rosenthal compact and $\alpha\le\mi(K_{\cA_Z})\le 1+\alpha+1$.
 \end{theorem}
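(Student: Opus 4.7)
The plan is to work with the canonical countable dense set $D=2^{<\omega}\subset K_{\cA_Z}$, identifying $C(K_{\cA_Z})$ with its restriction image $C_D(K_{\cA_Z})\subset\er^{D}$. First, for Rosenthal compactness: since $Z$ is Borel and $x\mapsto B(x)$ embeds $2^\omega$ homeomorphically into $2^D$, the family $\cA_Z$ is itself Borel, hence analytic. This yields Rosenthal compactness of $K_{\cA_Z}$ either by the Marciszewski--Pol result \cite[Lemma 4.4]{MaPo2} mentioned in the introduction, or a posteriori from the Borel description of $C_D(K_{\cA_Z})$ below combined with Theorem \ref{src:1}(a).

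For the upper bound $\mi(K_{\cA_Z})\le 1+\alpha+1$ I would give an explicit description: $f\in\er^D$ is (the restriction of) a continuous function on $K_{\cA_Z}$ iff $f$ is bounded and there exists $c\in\er$ (the value at $\infty$) such that (a) $\lim_n f(x|n)$ exists for every $x\in Z$, and (b) for every $\varepsilon>0$ the level set $\{s\in D:|f(s)-c|\ge\varepsilon\}$ is contained in $B(x_1)\cup\cdots\cup B(x_k)\cup F$ for some $x_1,\dots,x_k\in Z$ and finite $F\subset D$ (the membership $x_i\in Z$ is automatic from the shape of the compact subsets of $D\cup\cA_Z$). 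Rewriting (a) as $\forall x\in 2^\omega\,(x\notin Z\vee\phi(x,f))$ with $\phi\in\Pi^0_3$ and $x\notin Z\in\Pi^0_{1+\alpha}$ places (a) in $\Pi^0_{1+\alpha}$ (using $\alpha\ge 2$). Condition (b) then adds an existential quantifier over finite tuples in $Z$, lifting the overall class by one step to $\Sigma^0_{1+\alpha+1}$, so $\mi(K_{\cA_Z})\le 1+\alpha+1$.

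For the lower bound I would use the continuous map $\Psi:2^\omega\to\er^D$ defined by $\Psi(x)=\chi_{B(x)}$ and check that $\Psi^{-1}(C_D(K_{\cA_Z}))=Z$. For $x\in Z$ the indicator extends continuously to $K_{\cA_Z}$ by taking value $1$ at $B(x)$ and $0$ at every other non-isolated point: almost disjointness gives continuity at every $B(y)$ with $y\neq x$, and $\{B(x)\}\cup B(x)$ is a compact neighborhood guaranteeing continuity at $\infty$. For $x\notin Z$ any continuous extension would have to vanish at every $B(y)$ with $y\in Z$ (again by almost disjointness), and then condition (b) fails at $\varepsilon=1/2$ since $B(x)$ cannot be covered by finitely many $B(y_i)$ with $y_i\in Z$ modulo a finite set. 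Continuity of $\Psi$ then transfers the Borel class of $C_D(K_{\cA_Z})$ to $Z$, giving $\mi(K_{\cA_Z})\ge\alpha$. A standard density argument ensures the bound does not depend on the particular countable dense set chosen.

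The main obstacle is the Borel class bookkeeping in the upper-bound step: one must combine the universal quantifier over $Z$ appearing in (a) with the existential quantifier over finite tuples from $Z$ appearing in (b), and verify that the ordinal arithmetic produces exactly $1+\alpha+1$. The cleanest route is to rewrite every occurrence of \emph{``$\forall x\in Z$''} and \emph{``$\exists x_1,\dots,x_k\in Z$''} through the complement $2^\omega\setminus Z\in\Pi^0_{1+\alpha}$, apply the standard inclusions $\Sigma^0_\beta,\Pi^0_\beta\subset\Sigma^0_{\beta+1}\cap\Pi^0_{\beta+1}$, and use that universal quantification over a Polish space preserves $\Pi^0_\beta$.
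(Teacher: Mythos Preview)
The paper does not prove this theorem at all --- it simply records it with the parenthetical reference ``see \cite[4.2]{Ma88}''. So there is no ``paper's proof'' to compare against; the relevant comparison is with Marciszewski's original argument.

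Your overall plan (work with $D=2^{<\omega}$, characterise $C_D(K_{\cA_Z})$ by conditions (a) and (b), pull back along $\Psi(x)=\chi_{B(x)}$ for the lower bound) is the natural one and matches the spirit of \cite{Ma88}. The lower-bound step is fine. The gap is in the upper bound, and it is exactly where you yourself flag the ``main obstacle''.

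Your proposed remedy --- ``use that universal quantification over a Polish space preserves $\Pi^0_\beta$'' --- is false for $\beta\ge 3$, which is precisely the range you need (you apply $\forall x\in 2^\omega$ to a $\Pi^0_{1+\alpha}$ set with $\alpha\ge 2$). Over a \emph{compact} space the co-projection does preserve $\Pi^0_1$ and $\Pi^0_2$, but already $\forall x\in 2^\omega$ applied to an $F_\sigma$ set can yield an arbitrary $\Pi^1_1$ set: embed $\omega^\omega$ as a $G_\delta$ in $2^\omega$ and note that projections of $G_\delta$ subsets of $2^\omega\times Y$ realise every analytic subset of $Y$. Dually, the existential quantifier ``$\exists x_1,\dots,x_k\in Z$'' in your condition (b) is a projection of a set of class $\ge\Pi^0_2$ along $(2^\omega)^k$, which again need not stay Borel of any fixed level. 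So as written, your computation only shows $C_D(K_{\cA_Z})$ is analytic, not that it lies in $\Sigma^0_{1+\alpha+1}$.

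The actual argument in \cite{Ma88} avoids this by exploiting the specific dependence of (a) and (b) on $x$: the branches $x_i$ appearing in (b) are not freely chosen in $Z$ but are \emph{determined} by $f$ (they must satisfy $|B(x_i)\cap S_\varepsilon|=\infty$, which pins $x_i$ down uniquely), so the quantification over $Z$ can be traded for conditions of the form ``every branch along which $f$ oscillates away from $c$ lies in $Z$'', and these are then handled by a careful inductive description rather than by a blanket closure property of the Borel hierarchy under $\forall x$. If you want to repair your sketch, that is the direction to push; the one-line quantifier bookkeeping you propose cannot work.
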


 Using Theorem \ref{tsr:1} and Theorem \ref{src:3} we arrive at the following.

\begin{corollary}\label{tsr:2}
There is a family $\{ K_\xi:\xi<\omega_1\}$ of separable Rosenthal compacta such that $C(K_\xi)\not\simeq C(K_\eta)$ whenever $\xi\neq\eta$ and
every $C(K_\xi)$ is a (nontrivial) twisted sum of $c_0$ and $c_0(\con)$.
\end{corollary}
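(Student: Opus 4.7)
The plan is to invoke Theorem \ref{tsr:1} with a carefully chosen $\omega_1$-sequence of Borel sets $Z_\xi\sub 2^\omega$ and to separate the resulting spaces $K_{\cA_{Z_\xi}}$ via the Rosenthal index $\mi$, using Theorem \ref{src:3} to upgrade nonequality of indices to nonisomorphism of the associated Banach spaces.

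First, I would fix a strictly increasing sequence $\la\alpha_\xi:\xi<\omega_1\ra$ of countable limit ordinals with $\alpha_\xi\ge\omega$; for concreteness set $\alpha_\xi=\omega\cdot(1+\xi)$. By ordinal arithmetic $1+\alpha_\xi=\alpha_\xi$, hence $1+\alpha_\xi+1=\alpha_\xi+1$, and since each $\alpha_\xi$ is a limit, the two-point intervals $[\alpha_\xi,\alpha_\xi+1]$ are pairwise disjoint. For each $\xi$, I would then pick a set $Z_\xi\sub 2^\omega$ with $|Z_\xi|=\con$ whose exact Borel class is $\Sigma^0_{1+\alpha_\xi}$. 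A clean recipe is to take the disjoint union of a Cantor set sitting in one clopen piece of $2^\omega$ (which supplies the cardinality $\con$, hence $|\cA_{Z_\xi}|=\con$) with a universal $\Sigma^0_{1+\alpha_\xi}$-set placed in a disjoint clopen piece (which forces the precise complexity). Setting $K_\xi=K_{\cA_{Z_\xi}}$, Theorem \ref{tsr:1} guarantees that each $K_\xi$ is a separable Rosenthal compactum satisfying
\[ \alpha_\xi\le \mi(K_\xi)\le \alpha_\xi+1.\]

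To close the argument, suppose $\xi\ne\eta$ and, by symmetry, $\alpha_\xi<\alpha_\eta$. Then $\mi(K_\xi)\le\alpha_\xi+1<\alpha_\eta\le\mi(K_\eta)$ by the disjointness of the intervals; since both indices exceed $\omega$, Theorem \ref{src:3} forbids $C(K_\xi)\simeq C(K_\eta)$. Finally, because $|\cA_{Z_\xi}|=\con>\omega$, Theorem \ref{tcc:1} identifies each $C(K_\xi)$ as a nontrivial twisted sum of $c_0$ and $c_0(\con)$, completing the proof.

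The main technical obstacle lies in producing each $Z_\xi$ at the \emph{exact} additive Borel class $\Sigma^0_{1+\alpha_\xi}$ while simultaneously maintaining $|Z_\xi|=\con$: only then does the \emph{lower} bound $\mi(K_\xi)\ge\alpha_\xi$ in Theorem \ref{tsr:1} engage to separate the indices. The disjoint-union construction above reduces this to the classical descriptive-set-theoretic fact that universal $\Sigma^0_\beta$-sets realize the exact additive class at every countable level $\beta\ge 2$; no further set-theoretic assumption is needed, and the construction is uniformly effective across all $\xi<\omega_1$.
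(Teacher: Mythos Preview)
Your proposal is correct and follows exactly the route the paper intends: apply Theorem~\ref{tsr:1} to Borel sets $Z_\xi$ of unbounded exact additive class to obtain Rosenthal compacta with Rosenthal index pinned to disjoint two-point intervals, separate the Banach spaces via Theorem~\ref{src:3}, and read off the nontriviality of the twisted sums from Theorem~\ref{tcc:1}. One minor simplification: the auxiliary Cantor summand is not needed, since a set that is genuinely $\Sigma^0_\beta\setminus\Pi^0_\beta$ for $\beta\ge 3$ is automatically uncountable (countable sets are $F_\sigma$) and hence already of size~$\con$.
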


\begin{prop}\label{tsr:5}
Let
$$0\longrightarrow c_0\longrightarrow C(L)\longrightarrow C(K)\longrightarrow 0$$
be a twisted sum, where $K$ is an infinite separable Rosenthal compact space, and $L$ is a compact space. If this twisted sum is trivial, then $L$ is a separable Rosenthal compact space, and $\mi(L)\le 1+\mi(K)$.
\end{prop}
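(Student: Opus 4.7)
The plan is to view $L$ as a countable discrete extension $L=K\cup\omega$ (the standard setup of Section~\ref{tsr}), so that $\omega$ is a countable set of isolated points, $K$ is closed in $L$, the subspace $c_0=\{g\in C(L):g|_K\equiv 0\}$ is identified via $g\mapsto g|_\omega$ with $c_0(\omega)$, and the quotient map is restriction to $K$. Separability of $L$ is then immediate: $D:=D_K\cup\omega$ is dense in $L$, where $D_K$ is a countable dense subset of $K$. The splitting hypothesis produces a bounded linear section $s:C(K)\to C(L)$ with $s(f)|_K=f$, and I will work with the measures $\mu_n:=\delta_n\circ s\in M(K)$, which satisfy $s(f)(n)=\mu_n(f)$ and $\|\mu_n\|\le \|s\|$.

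Choosing $D_K$ so as to realize $\mi(K)=\alpha$ (assuming $\alpha<\omega_1$, since the case $\alpha=\omega_1$ gives the trivial bound), I next describe $C_D(L)$ explicitly. Every $g\in C(L)$ decomposes uniquely as $g=s(f)+h$ with $f\in C(K)$ and $h|_K\equiv 0$, and such an $h$ extended by $0$ to $L$ is continuous on $L$ exactly when $h|_\omega\in c_0(\omega)$. Letting $f_\varphi\in C(K)$ denote the unique continuous extension of $\varphi\in C_{D_K}(K)$, I obtain
\[ C_D(L)=\bigl\{(\varphi,\psi)\in\er^{D_K}\times\er^\omega : \varphi\in C_{D_K}(K),\ \lim_n\bigl(\psi(n)-\mu_n(f_\varphi)\bigr)=0\bigr\}. \]

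Now the key ingredient is the fact recalled in Section~6 that each measure in $M(K)$, viewed on $C_{D_K}(K)$, is a function of Baire class one (this is where separable Rosenthalness of $K$ enters). Pulling $\mu_n$ back through the projection on the first factor, each $\alpha_n(\varphi,\psi):=\psi(n)-\mu_n(f_\varphi)$ is Baire-one on $C_{D_K}(K)\times\er^\omega$, so each set $\{|\alpha_n|\le 1/m\}$ is relatively $\Pi^0_2$. Writing
\[ \lim_n\alpha_n=0 \;\Longleftrightarrow\; \bigcap_m\bigcup_N\bigcap_{n\ge N}\{|\alpha_n|\le 1/m\} \]
puts the limit condition in $\Pi^0_4$ relative to $C_{D_K}(K)\times\er^\omega$, by the standard bookkeeping $\Pi^0_2\to\Pi^0_2\to\Sigma^0_3\to\Pi^0_4$. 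Lifting to $\er^D$ and intersecting with $C_{D_K}(K)\times\er^\omega$, which has ambient class at most $\Pi^0_{2+\alpha}$ (since $\alpha\ge 2$), I conclude that $C_D(L)\in\Pi^0_{\max(4,2+\alpha)}=\Pi^0_{2+\alpha}$, whence $\mi(L)\le 1+\alpha=1+\mi(K)$.

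It remains to verify that $L$ is Rosenthal compact. Since $C_D(L)$ is Borel (and, even in the case $\mi(K)=\omega_1$, analytic as an intersection of an analytic set with a Borel condition), Theorem~\ref{src:1}(b) implies that either $L$ is Rosenthal or $L$ contains a copy of $\beta\omega$. The latter is impossible: any non-isolated point of $\beta\omega\subset L$ cannot sit in the isolated set $\omega\subset L$, so the remainder $\omega^*=\beta\omega\setminus\omega$ would embed as a closed subspace of $K$, contradicting that $K$, being separable Rosenthal, is Fr\'echet--Urysohn by the Bourgain--Fremlin--Talagrand theorem. The main obstacle I foresee is keeping the Borel-class bookkeeping tight enough so that the limit condition lands in $\Pi^0_4$ rather than $\Pi^0_5$; the use of the non-strict inequality $|\alpha_n|\le 1/m$, yielding $\Pi^0_2$ at the innermost level, is exactly what saves the extra level and produces the promised $1+\mi(K)$ bound rather than $2+\mi(K)$.
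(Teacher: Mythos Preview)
Your argument has a genuine gap at the very first step: the hypothesis gives you only an abstract exact sequence $0\to c_0\to C(L)\to C(K)\to 0$, not that $L$ is a countable discrete extension of $K$. Nothing in the statement forces $K$ to sit inside $L$, the quotient map to be restriction, or the copy of $c_0$ to be $\{g:g|_K\equiv 0\}$. Everything you do afterwards---the separability of $L$ via $D_K\cup\omega$, the explicit description of $C_D(L)$, and the argument excluding $\beta\omega$ by pushing $\omega^*$ into $K$---rests on that unjustified identification. (For the applications in Section~\ref{tsr} the spaces $L(Z)$ \emph{are} countable discrete extensions, so your argument would cover those cases; but the proposition is stated, and proved in the paper, for arbitrary compact $L$.)

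The paper's proof bypasses the structure of $L$ entirely. Triviality of the sequence gives $C(L)\simeq c_0\oplus C(K)\simeq C\big((\omega+1)\oplus K\big)$. Godefroy's theorem that the class of separable Rosenthal compacta is stable under isomorphisms of function spaces then makes $L$ separable Rosenthal, and Theorem~\ref{src:3} yields $\mi(L)\le 1+\mi\big((\omega+1)\oplus K\big)$. Finally one checks directly that $\mi\big((\omega+1)\oplus K\big)\le\mi(K)$: with $E=\omega\cup D$ one has $C_E\big((\omega+1)\oplus K\big)\cong C_\omega(\omega+1)\times C_D(K)$, and since $C_\omega(\omega+1)$ is $F_{\sigma\delta}$ while $\mi(K)\ge 2$, the product stays in the Borel class of $C_D(K)$. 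This route needs no section $s$, no Baire-class bookkeeping for the measures $\mu_n$, and no $\beta\omega$ dichotomy; the heavy lifting is delegated to Godefroy's preservation result and Theorem~\ref{src:3}, which is precisely the machinery Section~6 was set up to provide.
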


\begin{proof}
Triviality of our twisted sum gives us the following string of isomorphisms
$$C(L)\simeq c_0\oplus C(K)\simeq C(\omega+1)\oplus C(K)\simeq C((\omega+1)\oplus K)\,.$$
Godefroy \cite{Go80} proved that the class of separable Rosenthal compacta is preserved by isomorphisms of function spaces; since $(\omega+1)\oplus K$ belongs to this class, so does $L$. By Theorem \ref{src:3} it is enough to verify that $\mi((\omega+1)\oplus K)\le \mi(K)$. Let $D$ be a countable dense subset of $K$ realizing the value of $\mi(K)$. By \cite[Theorem 2.1]{Ma88} $C_D(K)$ is not a $G_{\delta\sigma}$-subset of $\er^D$, and it is well known that $C_\omega(\omega+1)$ is an $F_{\sigma\delta}$-subset of $\er^\omega$. Hence, for the countable dense subset $E=\omega\cup D$ of $(\omega+1)\oplus K$, the space $C_E((\omega+1)\oplus K)$ can be identified with the product $C_\omega(\omega+1)\times C_D(K)$ which is a Borel subset of $\er^\omega\times \er^D$ of the class $\mi(K)$.
\end{proof}

We turn to examining twisted sums of $c_0$ and $C(\bS)$; below we follow the notation introduced in \ref{das} and Section \ref{counting};
recall that the space $L^\theta$ was defined just below formula (\ref{ltheta}).
Here the parameter $\theta$ is any function $\theta: (0,1)\to 2$. It will be now convenient to write $\theta$ as $\chi_Z$,
the characteristic function of a set $Z\sub (0,1)$.
In such a case, we write simply $L(Z)$ for $L^\theta$ with $\theta=\chi_Z$.

\begin{lemma}\label{tsr:4}
For any $Z\subseteq (0,1)$, the space $C_Q(L(Z))$ contains a $G_\delta$-subset $X$ homeomorphic to $[0,1]\setminus (Z\cup Q)$.
\end{lemma}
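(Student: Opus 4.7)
The plan is to embed $[0,1]\setminus(Z\cup Q)$ into $C_Q(L(Z))$ via the characteristic functions of initial segments of $Q$ cut out by the chain $\{R_x^\theta\}$. For every $x\in[0,1]\setminus(Z\cup Q)$ the set $P_x=\{q\in Q: q\le x\}$ belongs to $\fB^\theta$ (since $x\notin Z$ gives $P_x=R_x^\theta$, while the endpoints give $P_0=\emptyset$ and $P_1=Q$), so its characteristic function $f_x=\chi_{P_x}$ extends to a continuous function on $L(Z)$. I put $X=\{f_x:x\in[0,1]\setminus(Z\cup Q)\}\sub C_Q(L(Z))$.

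First, I would check that $\phi(x)=f_x$ is a homeomorphism onto $X$. Since $f_x(q)=1$ precisely when $q\le x$, and no $q\in Q$ equals any admissible $x$, the map $\phi$ is injective and continuous in the pointwise-on-$Q$ topology. Continuity of $\phi^{-1}$ follows by compactness: if $\phi(x_n)\to\phi(x)$ but $x_n\not\to x$, picking a cluster point $y\neq x$ of $(x_n)$ and any $q\in Q$ strictly between $x$ and $y$ contradicts coordinate convergence at $q$.

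The main step is to show $X$ is $G_\delta$ in $C_Q(L(Z))$. I would characterize $X$ as the set of $f\in C_Q(L(Z))$ satisfying
\begin{enumerate}[(a)]
\item $f(Q)\sub\{0,1\}$,
\item $f^{-1}(1)$ is downward-closed in $Q$,
\item $f^{-1}(1)$ has no maximum in $Q$,
\item $Q\setminus f^{-1}(1)$ has no minimum in $Q$.
\end{enumerate}
Conditions (a) and (b) cut out a closed subset $F$ of $\mathbb{R}^Q$ on which the coordinate sets $\{f(q)=0\}$ and $\{f(q)=1\}$ are clopen, so (c) and (d) become $G_\delta$ conditions of the shape $\bigcap_{q\in Q}\bigl(\{f(q)=0\}\cup\bigcup_{q'>q}\{f(q')=1\}\bigr)$ and its symmetric counterpart. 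Together with the density of $Q$ in $(0,1)$, conditions (a)--(d) force $f=\chi_{P_x}$ with $x:=\sup f^{-1}(1)\in[0,1]\setminus Q$.

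The delicate point, and the main obstacle, is to rule out $x\in Z$ using only the continuity of $f$ on $L(Z)$. Here the construction of $L^\theta$ is essential: for $x\in Z$ the definition $R_x^\theta=P_x\setminus S_x$ is rigged so that $S_x$ no longer accumulates at the left point of $x$ inside $L(Z)$. A direct inspection of the generators of $\fB^\theta$ shows that both $S_x$ (on which $f\equiv 1$) and any sequence $q_n\in Q$ decreasing to $x^+$ (on which $f\equiv 0$) converge in $L(Z)$ to one and the same free ultrafilter, namely the one containing $R_y^\theta$ exactly when $y>x$. Continuity of $f$ at this common limit would give $1=0$, a contradiction. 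Hence $x\notin Z$, so $f\in X$, completing the $G_\delta$ characterization and therefore the proof.
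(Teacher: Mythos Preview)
Your proof is correct and follows essentially the same approach as the paper: both identify $X$ with the set of characteristic functions of the proper initial segments $P_x$ (the paper uses the complementary final segments $(x,1)\cap Q$, a cosmetic difference), check that this set is $G_\delta$ in $2^Q$, and then verify that within $C_Q(L(Z))$ only the $x\notin Z\cup Q$ survive. The paper obtains the $G_\delta$ property more quickly by noting that the full family of initial segments is closed in $2^Q$ and the ones with $x\in Q$ form a countable set, whereas you spell it out via conditions (a)--(d); conversely, the paper leaves the crucial verification ``$P_x\in\fB^\theta\iff x\notin Z$'' to the reader, while your sequential argument showing that for $x\in Z$ both $S_x$ and any $q_n\searrow x$ converge to the same ultrafilter $\mathcal F_x^-$ makes this explicit (and is correct, since for each $y>x$ the nondecreasing sequence $S_y$ has only finitely many terms in any interval $(x,a)$ with $a<y$).
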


\begin{proof}
We will look for $X$ inside the closed subset $C_Q(L(Z),2) = C_Q(L(Z))\cap 2^Q$ of $C_Q(L(Z))$, which may be seen as traces of clopen subsets of
$L(Z)$ on $Q$, i.e., the algebra $\mathfrak{B}^\theta$. Define
\begin{eqnarray*}
P_1 &=& \{(x,1)\cap Q: x\in [0,1]\setminus Q\}\subseteq 2^Q\\
P_2 &=& \{(x,1)\cap Q: x\in Q\}\subseteq 2^Q\\
P_3 &=& \{[x,1)\cap Q: x\in Q\}\subseteq 2^Q
\end{eqnarray*}
where we identify $\mathcal{P}(Q)$ with $2^Q$. Observe that the union $P= P_1\cup P_2\cup P_3$ is closed in $2^Q$. Indeed, we have $s\in 2^Q\setminus P$ if and only if there exists $p<q$ in $Q$ with $s(p)= 1$ and $s(q) = 0$. Since $P_2\cup P_3$ is countable, $P_1$ is $G_\delta$-subset of $2^Q$. A routine verification shows that the mapping $x\mapsto (x,1)\cap Q$ is a homeomorphism of $[0,1]\setminus Q$ onto $P_1$. Analyzing the description of the algebra $\mathfrak{B}^{\chi_Z}$ defining
$L(Z)$ one can verify that $[(x,1)\cap Q]\in P_1\cap C_Q(L(Z))$ if and only if $x\in [0,1]\setminus (Z\cup Q)$. Hence, the set $X = P_1\cap C_Q(L(Z))$ has the required properties.
\end{proof}

\begin{corollary}\label{tsr:4.5}
Let $Z\subseteq (0,1)$ be such that $Z\notin \Sigma^0_{5}((0,1)) \cup \Pi^0_{5}((0,1))$. Then $C(L(Z))$
is a nontrivial twisted sum of $c_0$ and $C(\bS)$.
\end{corollary}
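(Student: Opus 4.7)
Suppose for contradiction that the twisted sum is trivial. Proposition \ref{tsr:5} then forces $L(Z)$ to be a separable Rosenthal compactum satisfying $\mi(L(Z)) \le 1 + \mi(\bS) = 3$, so by the definition of the index $\mi$ there exists a countable dense $D \subseteq L(Z)$ with $C_D(L(Z)) \in \Sigma^0_4(\er^D) \cup \Pi^0_4(\er^D)$. A standard change-of-dense-set argument (the same Baire-one comparison of pointwise topologies used in the proof of Theorem \ref{src:3}, which exploits the Fr\'echet-Urysohn property of Rosenthal compacta) permits one to transfer this complexity bound to the specific dense subset $D = Q$, at the cost of a shift by at most one Borel class, yielding $C_Q(L(Z)) \in \Sigma^0_5(\er^Q) \cup \Pi^0_5(\er^Q)$.

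Next I invoke Lemma \ref{tsr:4}: it furnishes a $G_\delta$-subset $X$ of $C_Q(L(Z))$ homeomorphic to $[0,1] \setminus (Z \cup Q)$ via the map $\psi: x \mapsto (x,1) \cap Q \in 2^Q \subseteq \er^Q$, together with a Polish set $P_1 \subseteq 2^Q$ onto which $\psi$ maps $[0,1] \setminus Q$ homeomorphically and which contains $X$. Since $P_1$ is $G_\delta$ in $\er^Q$ and $X = P_1 \cap C_Q(L(Z))$, the set $X$ is Borel of class at most $5$ in $\er^Q$, hence also in $P_1$. Pulling back along $\psi^{-1}$, the set $[0,1] \setminus (Z \cup Q)$ is Borel of class at most $5$ in the Polish space $[0,1] \setminus Q$.

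Taking complements in $[0,1] \setminus Q$ (which swaps $\Sigma$ with $\Pi$ but preserves the class) shows that $Z \cap ([0,1] \setminus Q)$ belongs to $\Sigma^0_5([0,1] \setminus Q) \cup \Pi^0_5([0,1] \setminus Q)$; since $[0,1] \setminus Q$ is itself $G_\delta$ in $[0,1]$, this set has the same class in $[0,1]$, and adjoining the countable (hence $F_\sigma$) set $Z \cap Q$ cannot push the class above $5$. We therefore obtain $Z \in \Sigma^0_5((0,1)) \cup \Pi^0_5((0,1))$, contradicting the hypothesis, and so the twisted sum must be nontrivial. The single delicate point is the transfer from a generic witnessing dense set $D$ to $D = Q$: it is there that the $+1$ shift is essentially unavoidable, and it is precisely this shift that explains why the hypothesis is phrased at level $5$ rather than at level $4$.
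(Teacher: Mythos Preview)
Your proof is correct and follows essentially the same route as the paper's: both assume triviality, use Proposition~\ref{tsr:5} to get $\mi(L(Z))\le 3$, invoke the ``change of dense set costs at most one Borel class'' principle (the paper cites \cite[Theorem~2.2]{Ma88}, you re-derive it from the Baire-one comparison underlying Theorem~\ref{src:3}) to obtain $C_Q(L(Z))\in\Sigma^0_5\cup\Pi^0_5$, and then contradict Lemma~\ref{tsr:4}. Your write-up is in fact more explicit than the paper's in the final step, carefully tracking the Borel class of $Z$ through the homeomorphism $\psi$, the restriction to the $G_\delta$ set $[0,1]\setminus Q$, and the adjunction of the countable set $Z\cap Q$; the paper compresses all of this into one line.
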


\begin{proof}
Suppose that $C(L(Z))$ represents a trivial twisted sum of $c_0$ and $C(\bS)$. Since $\mi(\bS)=2$, from Proposition \ref{tsr:5}
we consclude that $L(Z)$ is Rosenthal compact with $\mi(L(Z))\le 3$. Theorem 2.2 from \cite{Ma88} says that, for a separable Rosenthal compact space $K$, and any two countable dense subsets $D,E$ of $K$, the Borel classes of $C_D(K)$ and $C_E(K)$ can differ by at most $1$.
Therefore $C_Q(L(Z))\in \Sigma^0_{5}((\er^Q) \cup \Pi^0_{5}((\er^Q)$. Hence, any $G_\delta$--subset of
$C_Q(L(Z))$ is also Borel of the same class, a contradiction with Lemma \ref{tsr:4} and our assumption on $Z$.
\end{proof}

To prove the next Lemma \ref{tsr:3} we need to employ a more effective description of the sets $S_x$ used in Section 5.
We now consider  $Q={\mathbb Q} \cap (0,1)$;
for $x\in (0,1)$, let $0.i^x_0i^x_1\dots$ be the binary expansion of $x$ using infinitely many $1'$s, i.e., $i^x_k\in\{0,1\}, k\in\omega$ and $x= \sum_{k=0}^{\infty}(i^x_k/2^{k+1})$. We define
$$S_x =\left\{\sum_{k=0}^{n} i^x_{k}/2^{k+1}: n\in\omega\right\}\setminus \{0\}.$$
Using the fact that, for $x\in (0,1)\setminus Q$, the expansion $0.i^x_0i^x_1\dots$ also has infinitely many $0'$s, one can easily verify that the mapping $x\mapsto S_x$ is continuous on $(0,1)\setminus Q$ (actually, it is a homeomorphic embedding of $(0,1)\setminus Q$ into $2^Q$).

We also need the following two auxiliary results.
The first one is well known, and follows easily from the fact that Boolean operations on $\mathcal{P}(\omega)$ are continuous.

\begin{prop}\label{tsr:6}
Each subalgebra of $\mathcal{P}(\omega)$ with an analytic set of generators is analytic.
\end{prop}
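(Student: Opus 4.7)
The plan is to represent every element of the subalgebra generated by an analytic set $G\subseteq \mathcal{P}(\omega)=2^\omega$ in a standard normal form, and then to exhibit the subalgebra as a countable union of continuous images of finite powers of $G$.

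First I would recall that any element of the Boolean subalgebra generated by $G$ can be written in disjunctive normal form, i.e.\ as
\[
\bigcup_{i=1}^{k}\bigcap_{j=1}^{l_i} A_{i,j}^{\epsilon_{i,j}},
\]
for some $k, l_1,\dots, l_k\in\omega$, some sign pattern $\epsilon_{i,j}\in\{0,1\}$, and generators $A_{i,j}\in G$, with the convention $A^{1}=A$ and $A^{0}=\omega\setminus A$. Together with the two trivial elements $\emptyset$ and $\omega$, these expressions exhaust the subalgebra generated by $G$.

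Next, for each fixed combinatorial shape $\sigma=(k,l_1,\dots,l_k,(\epsilon_{i,j}))$, I would consider the map
\[
\Phi_\sigma\colon G^{l_1+\cdots+l_k}\longrightarrow 2^\omega,\qquad (A_{i,j})\longmapsto \bigcup_{i=1}^{k}\bigcap_{j=1}^{l_i} A_{i,j}^{\epsilon_{i,j}}.
\]
Since complementation and finite unions and intersections are continuous on $2^\omega=\mathcal P(\omega)$, the map $\Phi_\sigma$ is continuous. As $G$ is analytic, every finite power $G^n$ is analytic (analytic sets being closed under countable products), and therefore the image $\Phi_\sigma\bigl[G^{l_1+\cdots+l_k}\bigr]$ is analytic.

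Finally, the set of possible shapes $\sigma$ is countable, so the subalgebra generated by $G$ is a countable union of analytic sets (plus the two singletons $\{\emptyset\}$ and $\{\omega\}$); hence it is itself analytic. I do not anticipate any real obstacle: the only thing one has to check with a bit of care is that the disjunctive-normal-form representation indeed covers the whole generated algebra, which is a standard Boolean-algebra fact, and that continuity of the Boolean operations on $2^\omega$ is used correctly when passing to the product $G^{l_1+\cdots+l_k}$.
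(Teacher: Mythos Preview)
Your argument is correct and is exactly the standard proof the paper has in mind: the authors do not spell it out but simply remark that the result ``follows easily from the fact that Boolean operations on $\mathcal{P}(\omega)$ are continuous,'' which is precisely your disjunctive-normal-form / countable-union-of-continuous-images reasoning.
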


The second result is probably also well known, it can be derived from the results of Godefroy \cite{Go80}. For the sake of completeness we include an (alternative) proof of it.

\begin{prop}\label{tsr:7}
Let $K$ be a separable zero-dimensional compact space and $D$ a countable dense subspace of $K$. If $C_D(K,2)=C_D(K)\cap 2^D$ is analytic, so is $C_D(K)$.
\end{prop}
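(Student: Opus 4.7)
The plan is to recover every element of $C_D(K)$ from the analytic set $A:=C_D(K,2)$ by taking uniform limits on $D$ of finite $\er$-linear combinations of elements of $A$, and to encode this recovery in a way that visibly stays inside the analytic subsets of $\er^D$. Zero-dimensionality of $K$ will be used precisely to guarantee that every $g\in C(K)$ is a uniform limit of simple step functions supported on clopen partitions, so that restricting to $D$ produces an approximation inside the linear span of $A$.

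First I would introduce $E\subseteq \er^D$, the $\er$-linear span of $A$, and check that it is analytic. Writing
\[E=\bigcup_{n\in\omega}\Phi_n\bigl(\er^n\times A^n\bigr),\]
with $\Phi_n\colon \er^n\times(\er^D)^n\to\er^D$ the continuous map $(a_1,\dots,a_n,f_1,\dots,f_n)\mapsto \sum_{i\le n}a_i f_i$, this will be immediate: each $\er^n\times A^n$ is analytic, and continuous images and countable unions preserve analyticity.

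The main step, which I expect to require the most care, is the identity
\[C_D(K)=\bigcap_{n\ge 1}\Bigl\{h\in\er^D:\exists\, e\in E,\ \sup_{d\in D}|h(d)-e(d)|\le 1/n\Bigr\}.\]
For the inclusion ``$\subseteq$'', zero-dimensionality of $K$ lets me uniformly approximate any $g\in C(K)$ with $g|_D=h$ by simple step functions $g_n=\sum_i a_{n,i}\chi_{U_{n,i}}$, after which $e_n:=g_n|_D\in E$ satisfies $\sup_{d\in D}|h(d)-e_n(d)|=\sup_K|g-g_n|\to 0$. For the reverse inclusion, density of $D$ in $K$ is crucial: each $e_n\in E$ has a unique continuous step-function extension $g_n\in C(K)$, the equality $\sup_K|g_n-g_m|=\sup_{d\in D}|e_n(d)-e_m(d)|$ (continuous functions on $K$ attain the same supremum on $D$) makes $(g_n)$ Cauchy in $C(K)$, and its uniform limit $g$ satisfies $g|_D=h$, so $h\in C_D(K)$.

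Once this identity is in hand, the descriptive conclusion is immediate. The set $F_n=\{(h,e)\in \er^D\times\er^D:|h(d)-e(d)|\le 1/n\text{ for every }d\in D\}$ is closed in the product topology, as a countable intersection of closed conditions on pairs of coordinates; thus $(\er^D\times E)\cap F_n$ is analytic, and so is its projection onto the $h$-coordinate. Finally $C_D(K)$ appears as a countable intersection of such analytic sets, and therefore is itself analytic. The only delicate point is the verification of the key identity, and in particular its ``$\supseteq$'' direction, which genuinely exploits both the zero-dimensionality of $K$ and the coincidence of sup norms on $D$ and on $K$ for continuous functions.
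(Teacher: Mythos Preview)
Your argument is correct, and it takes a genuinely different route from the paper's. The paper first reduces to showing that $C_D(K,[0,1])$ is analytic, then identifies the countable power $C_D(K,2)^\omega$ with $C_D(K,2^\omega)$ (which is therefore analytic), and finally invokes a nontrivial external fact: by a lemma of Okunev based on Marde\v{s}i\'c's factorization theorem, there is a continuous surjection $\phi:2^\omega\to[0,1]$ for which the composition map $g\mapsto\phi\circ g$ from $C(K,2^\omega)$ onto $C(K,[0,1])$ is surjective; zero-dimensionality enters through this factorization. Your proof bypasses all of this machinery: you use zero-dimensionality directly, via uniform approximation by clopen step functions, to exhibit $C_D(K)$ as the uniform closure in $\er^D$ of the linear span of $C_D(K,2)$, encoded as a countable intersection of projections of closed-over-analytic sets. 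Your approach is more elementary and self-contained; the paper's approach is more ``topological'' and immediately gives the intermediate statement that $C_D(K,2^\omega)$ and $C_D(K,[0,1])$ are analytic, which might be of independent interest.
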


\begin{proof}
Since $\er$ is homeomorphic to $(0,1)$ and $C_D(K,(0,1)) = C_D(K,[0,1])\cap (0,1)^D$ is a $G_\delta$-subset of $C_D(K,[0,1])$, it is enough to prove that $C_D(K,[0,1])$ is analytic.
The countable product $C_D(K,2)^\omega$ can be identified with the space $C_D(K,2^\omega)$. Hence, $C_D(K,2^\omega)$ is also analytic, and it is enough to show that $C_D(K,[0,1])$ is a continuous image of $C_D(K,2^\omega)$. By \cite[Lemma 1]{Okunev}, whose key ingredient is the Marde\v{s}ic factorization theorem \cite{Mardesic}, there exists a continuous map $\phi: 2^\omega\to [0,1]$ such that the composition map $\phi^\circ: C(K,2^\omega)\longrightarrow C(K,[0,1])$ given by $g\to \phi\circ g$ is surjective.\end{proof}

\begin{lemma}\label{tsr:3}
If $\theta:(0,1)\to 2$ is Borel then $L^\theta$ is Rosenthal compact.
\end{lemma}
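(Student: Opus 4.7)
The plan is to apply Theorem \ref{src:1}(b) to the separable compactum $L^\theta$ (with countable dense set $Q$), which reduces the proof to two tasks: showing that $C_Q(L^\theta)$ is analytic in $\er^Q$, and showing that $L^\theta$ contains no copy of $\beta\omega$.

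For the first task I would use the two propositions proved just before the lemma. By Proposition \ref{tsr:7} it suffices to check that $C_Q(L^\theta,2)$ is analytic in $2^Q$. Since $L^\theta=\ult(\fB^\theta)$ is zero-dimensional, Stone duality, together with the identification of each $q\in Q$ with the principal ultrafilter $\cF_q=\{A\in\fB^\theta:q\in A\}$ it generates (via the singleton $\{q\}\in\fin\subseteq\fB^\theta$), identifies $C_Q(L^\theta,2)$ with the algebra $\fB^\theta$ itself viewed as a subset of $2^Q$. By Proposition \ref{tsr:6}, analyticity of $\fB^\theta$ is implied by analyticity of its generating family $\{R_x^\theta:x\in (0,1)\}\cup\fin$. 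The set $\fin$ is countable, so I only need to verify that the map $\Phi:(0,1)\to 2^Q$, $\Phi(x)=R_x^\theta$, is Borel (its image is then analytic as a Borel image of a Borel set). Splitting $(0,1)$ into the Borel pieces $\theta^{-1}(0)$ and $\theta^{-1}(1)$, on the first piece $\Phi(x)=P_x$ is Borel because each coordinate satisfies $\chi_{P_x}(q)=\chi_{[q,1)}(x)$; on the second piece $\Phi(x)=P_x\setminus S_x$, and here the effective binary-expansion description of $S_x$ introduced just above the lemma makes $x\mapsto S_x$ Borel, since for each fixed dyadic $q\in Q$ the event $q\in S_x$ is determined by a finite initial segment of the binary digits of $x$.

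For the second task, a cardinality argument suffices. Set-theoretically $L^\theta=Q\cup \bS$, where $Q$ parametrizes the isolated points (principal ultrafilters) and $\bS$ parametrizes the non-principal ones through the isomorphism $\fB^\theta/\fin\simeq \fA$ noted in subsection \ref{das}; hence $|L^\theta|=\con$. Since $|\beta\omega|=2^\con>\con$, no copy of $\beta\omega$ can embed into $L^\theta$. Combining both tasks with Theorem \ref{src:1}(b) concludes that $L^\theta$ is Rosenthal compact.

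The main technical point I foresee is the Borel measurability of $\Phi$, and this is precisely why the proof introduces the canonical binary-expansion description of $S_x$: for an arbitrary choice of approximating sequences $(q_x^n)$ the dependence on $x$ need not be Borel, whereas the canonical one is.
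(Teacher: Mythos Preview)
Your proof is correct and follows essentially the same route as the paper's: identify $C_Q(L^\theta,2)$ with $\fB^\theta$, reduce via Proposition~\ref{tsr:6} to analyticity of the generators $\{R_x^\theta\}$, check this by Borel measurability of $x\mapsto R_x^\theta$ (the paper removes $Q$ and uses continuity of $x\mapsto S_x$ on $(0,1)\setminus Q$, while you argue Borelness coordinatewise), then apply Proposition~\ref{tsr:7} and Theorem~\ref{src:1}(b). The only noteworthy difference is that you spell out the cardinality argument excluding a copy of $\beta\omega$ in $L^\theta$, which the paper leaves implicit.
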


\begin{proof}
Let $\theta = \chi_Z$, obviously $Z$ is a Borel subset of $(0,1)$. We will consider the Borel sets $Z_0 = Z\setminus Q$ and $Z_1 = (0,1)\setminus(Z\cup Q)$.

First, we will show that the set $\{R^\theta_x: x\in (0,1)\}\subset \mathcal{P}(Q)$ of generators of the algebra $\mathfrak{B}^\theta$ is analytic. We can neglect the countable set $\{R^\theta_x: x\in Q\}$, so it is enough to verify that both sets $G_i = \{R^\theta_x: x\in Z_i\},\ i=0,1$, are analytic.
We have $G_0 = \{P_x\setminus S_x: x\in Z_0\}$ and $G_1 = \{P_x: x\in Z_1\}$.
Since the map $A\mapsto Q\setminus A$ is a homeomorphism of $\mathcal{P}(Q)$ and $Q\setminus P_x = (x,1)\cap Q$, the argument from the proof of Lemma \ref{tsr:4} shows that $G_1$ is homeomorphic to $Z_1$, hence analytic. Observe that, by the same argument and the continuity of the mapping $x\mapsto S_x$, the map $\varphi: Z_0\to G_0$ defined by $\varphi(x) = P_x\setminus S_x$ is continuous and onto, therefore $G_0$ is also  analytic.

Lemma \ref{tsr:6} implies that the algebra $\mathfrak{B}^\theta$, which can be identified with $C_Q(L^\theta,2)$, is analytic. In turn, from Lemma \ref{tsr:7} we infer that $C_Q(L^\theta)$ is also analytic. Now, Theorem \ref{src:1}(b) gives us the desired conclusion.
\end{proof}

\begin{corollary}
If the set $Z\sub (0,1)$ is not coanalytic then
\begin{enumerate}[(i)]
\item $L(Z)$ is not Rosenthal compact;
\item  $C(L(Z))$ is a nontrivial twisted sum of $c_0$ and $C(\bS)$;
\item  $C(L(Z))$ is not isomorphic to $C(L^\theta)$ whenever the function $\theta$ is
Borel.
\end{enumerate}
\end{corollary}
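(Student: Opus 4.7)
The plan is to establish (i) first, as once $L(Z)$ is known not to be Rosenthal compact, both (ii) and (iii) follow almost immediately from the machinery already developed.

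For (i), I will argue by contradiction. Assume $L(Z)$ is Rosenthal compact. Since $Q$ is a countable dense subset of $L(Z)$, Theorem \ref{src:1}(a) yields that $C_Q(L(Z))$ is an analytic subset of $\er^Q$. By Lemma \ref{tsr:4}, $C_Q(L(Z))$ contains a $G_\delta$ subset $X$ homeomorphic to $[0,1]\setminus (Z\cup Q)$. Writing $X=C_Q(L(Z))\cap B$ for some $G_\delta$, hence Borel, set $B\sub \er^Q$, the set $X$ is analytic as the intersection of an analytic set with a Borel one. Since analyticity is a topological invariant among separable metric spaces, $[0,1]\setminus (Z\cup Q)$ is itself analytic in $[0,1]$, so $Z\cup Q$ is coanalytic. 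A routine bookkeeping step---writing $Z\setminus Q=(Z\cup Q)\cap([0,1]\setminus Q)$, which is coanalytic as the intersection of a coanalytic set with a Borel one, noting that $Z\cap Q$ is countable, and expressing $Z=(Z\setminus Q)\cup(Z\cap Q)$---then makes $Z$ coanalytic, contradicting the hypothesis on $Z$.

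For (ii), recall that the discussion preceding Corollary \ref{counting:3} already exhibits $C(L(Z))$ as a twisted sum of $c_0$ and $C(\bS)$, so only nontriviality is in question. If that twisted sum were trivial, then Proposition \ref{tsr:5} applied with $K=\bS$ (separable Rosenthal, with $\mi(\bS)=2$) would force $L(Z)$ to be a separable Rosenthal compact space, in direct conflict with (i). For (iii), suppose $\theta$ is Borel and $C(L(Z))\simeq C(L^\theta)$. Lemma \ref{tsr:3} ensures that $L^\theta$ is Rosenthal compact, and since the class of separable Rosenthal compacta is preserved by isomorphisms of $C(K)$-spaces (Godefroy's theorem, as recalled in the proof of Proposition \ref{tsr:5}), $L(Z)$ would also be Rosenthal compact---again contradicting (i).

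The only step demanding genuine work is (i), and even there the main content is the descriptive set-theoretic bookkeeping that transports analyticity of $X\sub\er^Q$ back to coanalyticity of $Z\sub[0,1]$; parts (ii) and (iii) are then formal consequences of (i) combined with Proposition \ref{tsr:5}, Lemma \ref{tsr:3}, and the Godefroy invariance of separable Rosenthal compacta under $C(K)$-isomorphism.
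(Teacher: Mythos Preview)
Your argument is correct and follows exactly the route the paper indicates: part (i) comes from Lemma \ref{tsr:4} together with Theorem \ref{src:1}(a), part (ii) from (i) and Proposition \ref{tsr:5}, and part (iii) from (i), Lemma \ref{tsr:3} and Godefroy's invariance theorem. The paper's own proof is a three-line sketch pointing to the same lemmas; you have merely filled in the descriptive set-theoretic bookkeeping (that a $G_\delta$ subset of an analytic set is analytic, that analyticity transfers along homeomorphisms, and that adjusting by the countable set $Q$ preserves coanalyticity), all of which is unproblematic.
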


\begin{proof}
Part $(i)$ follows directly from  Lemma \ref{tsr:4}; $(ii)$ is a consequence of $(i)$ and Proposition \ref{tsr:5}.
The last statement follows from Lemma \ref{tsr:3}.
\end{proof}

It is likely that twisted sums of $c_0$ and $C(\bS)$  that are of the form $C(K)$ with $K$ Rosenthal compact can be examined more closely using the
Rosenthal index. However, the following problem is open to us.

\begin{prob}
Let $\theta:(0,1)\to 2$ be Borel. Is $\mi(L^\theta)<\omega_1$? Can we find an effective estimate of $\mi(L^\theta)$ using the class of $\theta$?
\end{prob}

\section{Counting non-isomorphic $C(K)$-spaces one more time} \label{appendix}

We  consider here families $\cA$ consisting of countable infinite subsets of an uncountable set $I$, as in Dow and Vaughan \cite{DV10}.
Such $\cA$ is  an almost disjoint family if, again,  $A\cap B$ is finite for any distinct  $A,B\in \cA$.
We can form
a version of Aleksandrov-Urysohn space $K_\cA$, where
$$K_{\cA} = I \cup \{A: A\in \cA\} \cup \{\infty\}.$$
As before, points in $I$ are isolated, basic open neighborhoods of a given point $A\in\cA$ are of the form $\{A\}\cup (A\setminus  F)$ with $F\subseteq I$ finite, and
the point $\infty$  compactifies   the locally compact space $I\cup \{A:A\in\c A\}$.
Then $K_\cA$ is a scattered compact space of height $3$ and density $|I|$.

The arguments used in Theorem \ref{counting:1} and Corollary \ref{counting:2} can be generalized to obtain information about the cardinality of
$\Ext(c_0(\kappa^\omega), c_0(\kappa))$ for $\kappa$ satisfying  $\kappa<\kappa^\omega$ --- we outline it here.

\begin{theorem} Assume that $\kappa<\kappa^\omega$ and let $\cA$ be an almost disjoint family of countable subsets of $\kappa$ with $|\cA|=\kappa^\omega$.
 Then $C(K_\cA)$ is a non-trivial twisted sum of $c_0(\kappa)$ and $c_0(\kappa^\omega)$.
\end{theorem}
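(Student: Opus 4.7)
The first step is to exhibit $C(K_\cA)$ as a twisted sum of $c_0(\kappa)$ and $c_0(\kappa^\omega)$. Put $L=K_\cA\setminus \kappa=\{A:A\in\cA\}\cup\{\infty\}$; the points $A\in\cA$ are isolated in $L$ and $\infty$ is the Alexandroff point, so $L$ is homeomorphic to $\bA(\kappa^\omega)$ and $C(L)\simeq c_0(\kappa^\omega)$. The restriction map $\rho\colon C(K_\cA)\to C(L)$ is onto by Tietze's theorem, and as in the justification of Theorem \ref{tcc:1} its kernel consists of the $f\in C(K_\cA)$ vanishing on $L$: the points of $\kappa$ are isolated in $K_\cA$, so any such $f$ must have $\{i\in\kappa:|f(i)|\ge\varepsilon\}$ finite for every $\varepsilon>0$, and the kernel is thus canonically $c_0(\kappa)$. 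This produces the short exact sequence
\[ 0\longrightarrow c_0(\kappa)\longrightarrow C(K_\cA)\stackrel{\rho}\longrightarrow c_0(\kappa^\omega)\longrightarrow 0. \]

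\textbf{Non-triviality.} Assume the sequence splits and fix a bounded linear lift $E\colon C(L)\to C(K_\cA)$ of $\rho$. For each $A\in\cA$ set $h_A=E(\delta_A)$, so $h_A(A)=1$, $h_A(B)=0$ for $B\in\cA\setminus\{A\}$, and $h_A(\infty)=0$. Continuity of $h_A$ at $A$ produces a finite set $F_A\sub A$ with $h_A(i)>1/2$ for every $i\in G_A:=A\setminus F_A$. For $i\in\kappa$ set $M_i=\{A\in\cA:i\in G_A\}$. Given any finite $F\sub M_i$, the characteristic function $\chi_F\in C(L)$ has sup-norm $1$, so
\[ \frac{|F|}{2}<\sum_{A\in F}h_A(i)\le\Big\|\sum_{A\in F}h_A\Big\|_\infty=\|E(\chi_F)\|_\infty\le\|E\|, \]
and therefore every $M_i$ is finite with $|M_i|\le 2\|E\|$.

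\textbf{Cardinal contradiction.} Since $|[\kappa]^{<\omega}|=\kappa<\kappa^\omega=|\cA|$, the pigeonhole principle yields some $F^\ast\in[\kappa]^{<\omega}$ with $\cA^\ast:=\{A\in\cA:F_A=F^\ast\}$ of cardinality $\kappa^\omega$, and for every $A\in\cA^\ast$ the set $G_A$ is cofinite in $A$, in particular infinite. Counting incidences in $\cA^\ast\times\kappa$ under the relation $i\in G_A$ in two ways now gives
\[ \kappa^\omega\le\sum_{A\in\cA^\ast}|G_A|=\sum_{i\in\kappa}|\{A\in\cA^\ast:i\in G_A\}|\le\sum_{i\in\kappa}|M_i|\le 2\|E\|\cdot\kappa=\kappa, \]
contradicting $\kappa<\kappa^\omega$. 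I expect the main obstacle to be arranging this incidence count: once continuity supplies the finite exceptional sets $F_A$ and pigeonhole stabilises them across $\cA^\ast$, the double counting delivers the cardinal inequality directly.
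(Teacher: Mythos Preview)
Your argument is correct and rests on the same idea as the paper's: assuming a bounded extension operator $E\colon C(L)\to C(K_\cA)$, the functions $h_A=E(\chi_{\{A\}})$ must exceed $1/2$ on a cofinite subset of $A$, and this forces too many sets $A$ to ``crowd'' the $\kappa$ isolated points. The paper's execution is more direct: for each $A$ choose a single point $\xi_A\in\kappa$ with $h_A(\xi_A)>1/2$; since $|\cA|=\kappa^\omega>\kappa$, infinitely many distinct $A_n$ share the same $\xi$, and then $\bigl\|\sum_{j\le m}h_{A_j}\bigr\|\ge m/2$ while $\bigl\|\sum_{j\le m}\chi_{\{A_j\}}\bigr\|=1$, contradicting boundedness of $E$. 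Your detour through stabilising $F_A$ is unnecessary: once you have $|M_i|\le 2\|E\|$ for every $i\in\kappa$, the fact that each $G_A$ is non-empty (which holds for \emph{all} $A\in\cA$, not just those in $\cA^\ast$) already gives $\cA=\bigcup_{i\in\kappa}M_i$ and hence $\kappa^\omega\le\kappa\cdot\omega=\kappa$.
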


\begin{proof}
Write $K_\cA'$ for the derivative of $K_\cA$, which is equal to $\cA\cup\{\infty\}$. It is enough to check that
there is no bounded extension operator $E: C(K_\cA') \to C(K_\cA)$.
Assume the existence of such $E$;  for any $A\in \cA$, the singleton $\{A\}$ is open in $K_\cA'$ so, writing $g_A$ for its characteristic function,
we have $g_A\in C(K_\cA')$.
Hence,
there must be $\xi_A\in \kappa$ so that
$|Eg_A(\xi_A)|>\frac12$. Since $|\cA|=\kappa^\omega>\kappa$,
we infer the existence of a point $\xi\in \kappa$ such that $\xi=\xi_{A_n}$ for a sequence of distinct $A_n\in\cA$.
Then, for every natural number  $m$ we obtain
$$\left|\sum_{j=1}^m Eg_{A_j}(\xi)\right| \geq \frac m2 \quad \mbox{and} \quad \left\|\sum_{j=1}^m g_{A_j}\right\|=1,$$
which contradicts continuity of $E$.
\end{proof}

The next results can be proved by similar arguments to those  used in \ref{counting:1} and \ref{counting:2}.

\begin{theorem}
Fix an infinite cardinal $\kappa$, and let $\K$ be a family of compact spaces such that
\begin{enumerate}[(i)]
\item every $K$ has density $\kappa$ and $|M(K)|\le 2^\kappa$;
\item For every pair of distinct $K,L\in\K$ one has $C(K)\simeq C(L)$ and $K, L$ are not homeomorphic.
\end{enumerate}
Then $\K$ is of cardinality at most $2^\kappa$.
\end{theorem}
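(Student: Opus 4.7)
The plan is to mimic the proof of Theorem \ref{counting:1} verbatim, replacing the countable dense set $\omega$ (and its \v{C}ech--Stone compactification $\beta\omega$) by a discrete set $D$ of cardinality $\kappa$ and its compactification $\beta D$. The cardinal arithmetic $\con^{\omega}=\con$ used there will be replaced by the elementary identity $(2^\kappa)^\kappa=2^\kappa$, which is precisely the bound claimed.

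First I would note that, since every $K\in\K$ has density $\kappa$, there is a continuous surjection $\pi_K:\beta D\to K$, obtained by extending any map from $D$ onto a dense subset of $K$. This yields an isometric embedding $\pi_K^\ast:C(K)\to C(\beta D)$ defined by $g\mapsto g\circ\pi_K$. Exactly as in the proof of Theorem \ref{counting:1}, for any two distinct (hence, by hypothesis (ii), non-homeomorphic) $K,L\in\K$ the equivalence relations on $\beta D$ induced by $\pi_K$ and $\pi_L$ must differ: otherwise the formula $h(\pi_K(p))=\pi_L(p)$ would properly define a homeomorphism $h:K\to L$. It follows that the subspaces $\pi_K^\ast[C(K)]$ and $\pi_L^\ast[C(L)]$ of $C(\beta D)$ are distinct.

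Next I would fix $K_0\in\K$ and, for each $K\in\K$, pick an isomorphism $T_K:C(K_0)\to C(K)$ and set $S_K=\pi_K^\ast\circ T_K:C(K_0)\to C(\beta D)$. Since $S_K[C(K_0)]=\pi_K^\ast[C(K)]$, the assignment $K\mapsto S_K$ is injective from $\K$ into the space $\cL(C(K_0),C(\beta D))$ of bounded operators. Hence it suffices to bound the cardinality of the latter. Any operator $R:C(K_0)\to C(\beta D)$ is uniquely determined by the family $\la R^\ast(\delta_d):d\in D\ra\in M(K_0)^D$, since a continuous function on $\beta D$ is determined by its values on the dense set $D$, so $\{\delta_d:d\in D\}$ separates elements of $C(\beta D)$. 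Hypothesis (i) gives $|M(K_0)|\le 2^\kappa$, whence
\[|\cL(C(K_0),C(\beta D))|\le |M(K_0)|^{|D|}\le (2^\kappa)^\kappa=2^{\kappa\cdot\kappa}=2^\kappa,\]
and therefore $|\K|\le 2^\kappa$, as required. I do not anticipate any real obstacle: every step of the original argument carries over once $\omega$ is replaced by a discrete set of cardinality $\kappa$, and the one arithmetic input $(2^\kappa)^\kappa=2^\kappa$ is automatic for infinite $\kappa$.
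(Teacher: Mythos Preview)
Your proposal is correct and follows exactly the approach the paper itself indicates: the paper does not spell out a proof here but simply says the result ``can be proved by similar arguments to those used in \ref{counting:1} and \ref{counting:2}'', and your write-up is precisely that adaptation, replacing $\omega$ by a discrete set $D$ of size $\kappa$ and the arithmetic $\con^\omega=\con$ by $(2^\kappa)^\kappa=2^\kappa$.
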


\begin{corollary} If  $2^{\kappa^\omega}>2^\kappa$ then there are $2^{\kappa^\omega}$ pairwise non-isomorphic twisted sums of $c_0(\kappa)$ and $c_0(\kappa^\omega)$.
\end{corollary}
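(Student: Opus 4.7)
The plan is to mimic the proof of Corollary~\ref{counting:2}, with the Aleksandrov--Urysohn compacta $K_\cA$ associated to almost disjoint families of countable subsets of $\kappa$ replacing those indexed by $\omega$, and with the counting theorem of this section playing the role of Theorem~\ref{counting:1}. Writing $\lambda=\kappa^\omega$, I would first exhibit $2^\lambda$ pairwise distinct almost disjoint families $\{\cA_\xi:\xi<2^\lambda\}$ of countable subsets of $\kappa$, each of cardinality $\lambda$. For this, identify $\kappa$ with the tree $\kappa^{<\omega}$ (which has cardinality $\kappa$) and take $\cA_0=\{\{\sigma|n:n\in\omega\}:\sigma\in\kappa^\omega\}$, an almost disjoint family of size $\lambda$; then enumerate as $\{\cA_\xi:\xi<2^\lambda\}$ the $2^\lambda$ subfamilies of $\cA_0$ of full cardinality $\lambda$. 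By the preceding theorem of this section, every $C(K_{\cA_\xi})$ is a nontrivial twisted sum of $c_0(\kappa)$ and $c_0(\lambda)$.

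Next I would verify the hypotheses needed to apply the counting theorem to the collection $\{K_{\cA_\xi}:\xi<2^\lambda\}$. Each $K_{\cA_\xi}$ is scattered of height~$3$ and contains the isolated set $\kappa$ as a dense discrete subspace, so its density equals $\kappa$. Since $K_{\cA_\xi}$ is scattered, every $\mu\in M(K_{\cA_\xi})$ is purely atomic, hence determined by a countable subset of $K_{\cA_\xi}$ together with a real-valued weight function; using $|K_{\cA_\xi}|=\lambda$ this yields $|M(K_{\cA_\xi})|\le\lambda^\omega\cdot\mathfrak c=\lambda\le 2^\kappa$, as required. To bound the number of $\cB$ satisfying $K_{\cA_\xi}\simeq K_\cB$, I would observe that any homeomorphism sends isolated points to isolated points; since the isolated set of every such compactum is canonically $\kappa$, the homeomorphism is determined by a permutation of $\kappa$, and there are only $\kappa^\kappa=2^\kappa$ such permutations. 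Hence at most $2^\kappa$ indices $\eta$ yield $K_{\cA_\eta}$ homeomorphic to a given $K_{\cA_\xi}$.

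Putting the pieces together, the counting theorem says that within each isomorphism class of spaces $C(K_{\cA_\xi})$ there are at most $2^\kappa$ distinct homeomorphism types of $K_{\cA_\xi}$, and by the previous paragraph each such homeomorphism type is realized by at most $2^\kappa$ indices, giving at most $2^\kappa\cdot 2^\kappa=2^\kappa$ indices per isomorphism class. Since the $2^\lambda$ indices split into classes of size at most $2^\kappa$ and $2^\kappa<2^{\kappa^\omega}$ by hypothesis, there must be $2^{\kappa^\omega}$ pairwise nonisomorphic spaces among the $C(K_{\cA_\xi})$, producing the required twisted sums. The step I expect to be slightly delicate is the cardinal bookkeeping around $M(K_{\cA_\xi})$: although \emph{scattered implies purely atomic} is classical, care is needed to obtain the bound $2^\kappa$ via the arithmetic identities $\lambda^\omega=\kappa^\omega=\lambda\le (2^\kappa)^\omega=2^\kappa$.
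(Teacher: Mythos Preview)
Your proposal is correct and follows precisely the approach the paper indicates: the paper does not spell out a proof of this corollary but simply states that it can be proved ``by similar arguments to those used in'' Theorem~\ref{counting:1} and Corollary~\ref{counting:2}, and your write-up carries out exactly that generalization (building $2^{\kappa^\omega}$ almost disjoint families inside a fixed tree family, bounding $|M(K_{\cA})|$ via scatteredness, bounding homeomorphism types by permutations of the dense isolated set, and then applying the $2^\kappa$-counting theorem). One small point to make explicit: the ``preceding theorem'' you invoke requires $\kappa<\kappa^\omega$, which you use without comment --- it does follow immediately from the hypothesis $2^{\kappa^\omega}>2^\kappa$, but say so.
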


The above corollary can be applied to any (infinite) $\kappa<\con$ under Martin's axiom since then $2^\kappa= \con$.


\begin{thebibliography}{99}

\bibitem{argraik} S. Argyros, T. Raikoftsalis, \emph{Banach spaces with a unique nontrivial decomposition}, Proc.\ Amer.\ Math.\ Soc.\ 136 (2008), 3611--3620.

\bibitem{SIBS}
A.\ Avil\'es, F.\ Cabello S\'anchez, J.M.F. Castillo, M.\ Gonz\'alez,  Y.\ Moreno
{\em Separably injective Banach spaces}, Lecture Notes in Mathemathics 2132, Springer (2016).

\bibitem{avimar} A. Avil\'es, W. Marciszewski,  G. Plebanek, \emph{Twisting $c_0$ around nonseparable Banach spaces}, 
arxiv 1902.07783v1.

\bibitem{BFT78} J.\ Bourgain, D.H.\ Fremlin,  M.\ Talagrand, \emph{
 Pointwise compact sets of Baire-measurable functions}, Amer.\ J.\
 Math.\ {100} (1978), 845--886.

\bibitem{cabecastlong} F. Cabello S\'anchez, J.M.F. Castillo, \emph{The long homology sequence for quasi Banach
spaces, with applications}, Positivity, 8 (2004), 379--394.

\bibitem{CC04}
F.\ Cabello S\'{a}nchez, J.M.F.\  Castillo,
{\em Uniform boundedness and twisted sums of Banach spaces}
Houston J.\ Math.\ 30 (2004),  523--536.

\bibitem{CC20}
F.\ Cabello S\'{a}nchez, J.M.F.\  Castillo,
{\em Homological methods in Banach space Theory}, Cambridge Studies in Advanced Math. Cambridge Univ. Press. Scheduled 2020.

\bibitem{ccy}
F. Cabello S\'anchez, J.M.F. Castillo,  D. Yost,
\emph{Sobczyk's theorems from A to B}, Extracta Math.\ 15 (2000),
391--420.

\bibitem{ccky} F. Cabello S\'{a}nchez, J.M.F. Castillo, N.J. Kalton, D.T. Yost, \emph{Twisted sums with $C(K)$ spaces,} Trans.\ Amer.\
Math. Soc.\ 355 (2003), 4523--4541.

\bibitem{ephe} J.M.F. Castillo, P.L. Papini, \emph{Epheastus account on Trojanski's polyhedral war},  Extracta Math. Vol. 29, (2014) 35 -- 51.


\bibitem{Ca16}
J.M.F.\ Castillo, {\em Nonseparable $C(K)$-spaces can be twisted when K is a finite height compact},  Topology Appl.\ 198 (2016), 107--116.


\bibitem{castgonz}  J.M.F. Castillo, M. Gonz\'alez,
\textit{Three-space problems in Banach space theory}, Lecture
Notes in Math. 1667, Springer 1997.


\bibitem{castmoreisr} J.M.F. Castillo, Y. Moreno, \emph{On the
Lindenstrauss-Rosenthal theorem}, Israel J.\ Math.\ 140 (2004), 253--270.

\bibitem{castsimo} J.M.F. Castillo, M. Simoes, \emph{Property $(V)$ still fails the 3-space property}, Extracta Math.\ (2012),  5--11.



\bibitem{Co18} C.\ Correa, {\em Nontrivial twisted sums for finite height spaces under Martin's Axiom}, Fund. Math. 248 (2020) 195--204.
\bibitem{CT16} C.\ Correa, D.V.\ Tausk, {\em  Nontrivial twisted sums of $c_0$ and $C(K)$},  J.\ Funct.\ Anal.\ 270 (2016),  842--853.

\bibitem{CT18}
C.\ Correa, D.V.\ Tausk, {\em  Local extension property for finite height spaces}, Fund.\ Math.\ 245 (2019), 149--165.

\bibitem{De14} G.\ Debs, {\em Descriptive Aspects of Rosenthal Compacta}, in:  Recent progress in general topology III, 205--227, Atlantis Press, Paris (2014).

\bibitem{DM95} T.\ Dobrowolski, W.\ Marciszewski,
{\em Classification of function spaces with the pointwise topology determined by a countable dense set},  Fund.\ Math.\ 148 (1995),  35--62.

\bibitem{DV10}
A.\ Dow, J.E.\ Vaughan, {\em
Mr\'owka maximal almost disjoint families for uncountable cardinals},
Topology Appl.\  157 (2010),  1379--1394.


\bibitem{Go80}  G.\ Godefroy, {\em Compacts de Rosenthal},   Pacific J.\ Math.\ 91 (1980),  293--306.

\bibitem{Hr14}  M.\ Hru\v{s}\'ak, {\em Almost disjoint families and topology}, in:  Recent progress in general topology III, 601--638,
Atlantis Press, Paris (2014).

\bibitem{HH18}
F.\ Hern\'andez-Hern\'andez, M.\  Hru\v{s}\'ak,
{\em  Topology of Mr\'owka-Isbell spaces}, Pseudocompact topological spaces, 253--289, Dev.\ Math.\  55, Springer, Cham, 2018.

\bibitem{johnlind} W.B. Johnson, J. Lindenstrauss.
\emph{Some remarks on weakly compactly generated Banach spaces.}
Israel J. Math. 17 (1974), 219--230.

\bibitem{Ke95}
A.S.\ Kechris, {\em  Classical descriptive set theory} Graduate Texts in Mathematics 156, Springer-Verlag, New York, (1995).


\bibitem{kosz}  P. Koszmider,
\emph{On decomposition of Banach spaces of continuous functions on
Mr\'owka's spaces}, Proc.\ Amer.\ Math.\ Soc.\ 133 (2005), 2137--2146.

\bibitem{KL20}  P. Koszmider, N.J. Laustsen, \emph{A Banach space induced by an almost disjoint family, admitting only few operators and decompositions}, arXiv:2003.03832.

\bibitem{Ku64} K.\ Kuratowski, {\em Topology}, Vol.\ 1, Academic Press and PWN (1966).

\bibitem{Ma1} W. Marciszewski,
\emph{A function space $C(K)$ not weakly homeomorphic to $C(K)\times C(K)$}.
Studia Math. 88 (1988), no. 2, 129--137.

\bibitem{Ma88} W.\ Marciszewski. \emph{On a classification of pointwise compact set of the first Baire class functions},
 Fund.\ Math.\ 133 (1989), 195--2009.

\bibitem{Mar03} W.\ Marciszewski, {\em Rosenthal compacta}, in: Encyclopedia of General Topology, Elsevier, 2003, 142--144.

\bibitem{maple}  W.\ Marciszewski, G.\ Plebanek, \emph{Extension operators and twisted sums of $c_0$ and
$C(K)$ spaces}, J.\ Func.\ Anal.\ 274 (2018), 1491--1529.


\bibitem{marcpol}  W.\ Marciszewski, R.\ Pol,
\emph{On Banach spaces whose norm-open sets are $F_\sigma$ sets in
the weak topology}, J.\ Math.\ Anal.\ Appl.\ 350 (2009), 708--722.

\bibitem{MaPo2} W.\ Marciszewski, R.\ Pol,  \textit{On Borel almost disjoint families},
Monatsh. Math., {\bf 168} (2012), 545--562.

 \bibitem{Mardesic} S.\ Marde\v sic,  \emph{On covering dimension and inverse limits of compact spaces}, Illinois J.\ Math.\ 4(2) (1960),  278-–291.
\bibitem{Me96} S.\  Mercourakis, {\em Some remarks on countably determined measures and uniform distribution of sequences},
Monatsh.\ Math.\  121 (1996), 79--111.

 \bibitem{Okunev} O.\ Okunev, \emph{The Lindel\"of number of $C_p(X)\times C_p(X)$
 for strongly zero-dimensional $X$},  Cent.\ Eur.\ J.\ Math.\ 9 (2011),  978–-983.

\bibitem{Se60} Z.\ Semadeni, \emph{Banach spaces non-isomorphic to their Cartesian squares. II}, Bull. Acad. Polon. Sci. S\'er. Sci. Math. Astronom. Phys. 8 No. 2 (1960) 81--86.


\bibitem{So03} L.\ Soukup, {\em Scattered spaces}, in: Encyclopedia of General Topology, Elsevier, 2003, 350--353.


\end{thebibliography}
\end{document}